\theoremstyle{plain}\newtheorem{definition}{Definition}[section]
\theoremstyle{definition}\newtheorem{theorem}{Theorem}[section]
\theoremstyle{plain}\newtheorem{lemma}[theorem]{Lemma}
\theoremstyle{plain}\newtheorem{coro}[theorem]{Corollary}
\theoremstyle{plain}
\theoremstyle{remark}\newtheorem{remark}{Remark}[section]
\newcommand{\Div}{\mathrm{div}\,}
\newcommand{\B}{\Big}
\newcommand{\be}{\begin{equation}}
\newcommand{\ee}{\end{equation}}
 \newcommand{\ba}{\begin{aligned}}
 \newcommand{\ea}{\end{aligned}}
  \newcommand{\f}{\frac}
  \newcommand{\ben}{\begin{enumerate}}
   \newcommand{\een}{\end{enumerate}}
\newcommand{\Rmnum}[1]{\expandafter\@slowromancap\romannumeral #1@}
\numberwithin{equation}{section}
\begin{document}
\title{The role of density  in the  energy conservation  for      the isentropic compressible
Euler equations}
\author{Yanqing Wang\footnote{   College of Mathematics and   Information Science, Zhengzhou University of Light Industry, Zhengzhou, Henan  450002,  P. R. China Email: wangyanqing20056@gmail.com}  ~   \,   Yulin Ye\footnote{Corresponding author. School of Mathematics and Statistics,
Henan University,
Kaifeng, 475004,
P. R. China. Email: ylye@vip.henu.edu.cn} ~   \, and \, Huan Yu \footnote{ School of Applied Science, Beijing Information Science and Technology University, Beijing, 100192, P. R. China Email:  yuhuandreamer@163.com}
 }
\date{}
\maketitle
\begin{abstract}
In this paper, we  study     Onsager's conjecture on the energy conservation for
 the isentropic compressible Euler equations via establishing
  the    energy conservation   criterion  involving the density $\varrho\in L^{k}(0,T;L^{l}(\mathbb{T}^{d}))$.  The motivation is to  analysis  the role of   the integrability of    density
   of  the weak solutions keeping energy  in   this system, since almost all known corresponding  results  require $\varrho\in L^{\infty}(0,T;L^{\infty}(\mathbb{T}^{d}))$.
Our results     imply  that the lower integrability of the density  $\varrho$ means that more integrability of the velocity  $v$ are necessary in energy conservation  and the inverse is also true. The proof relies on the  Constantin-E-Titi type and Lions type
commutators on mollifying kernel.

  \end{abstract}
\noindent {\bf MSC(2000):}\quad 35Q30, 35Q35, 76D03, 76D05\\\noindent
{\bf Keywords:} compressible Euler equations; Onsager's conjecture; energy conservation;  vacuum 
\section{Introduction}
\label{intro}
\setcounter{section}{1}\setcounter{equation}{0}
In 1949,  Onsager  \cite{[Onsager]}  conjectured that
  every weak solution of the incompressible homogeneous Euler equations with H\"older continuity exponent  $\alpha>1/3$  must conserve     energy and  there exists a weak solution  with  energy dissipation to the Euler equations  when $\alpha<1/3$.  Let  $v$
represent the fluid  velocity field and $\pi$ stand  for  the scalar pressure.
The classical  incompressible Euler equations   describing  inviscid fluids  read
  \be\left\{\ba\label{Euler}
  &  v _{t} +\Div(  v\otimes v)+\nabla
\pi=0,\\
&\Div v=0,\\
& v(x,0)=v_{0}.
\ea\right.\ee

The positive  part of Onsager's conjecture was proved by Constantin-E-Titi  in \cite{[CET]},
where they showed that   energy is  conserved for a weak solution $v$   in the Besov space $L^{3}(0,T; B^{\alpha}_{3,\infty}(\mathbb{T}^{3}))$ with $\alpha>1/3$.  For $\alpha=1/3$, it is shown that every weak solution $v$ of the Euler equations
on $[0, T ]$ conserves energy provided   $v \in L^{3}(0,T; B^{1/3}_{3,c(N)})$ by
Cheskidov-Constantin-Friedlander-Shvydkoy in \cite{[CCFS]}, where
$
B^{1/3}_{3,c(N)}=\{(u\in B^{1/3}_{3,\infty}, \lim_{q\rightarrow\infty}\int_{0}^{T}2^{q}\|\Delta_{q}u\|^{3}_{L^{3}}dt=0)\}$.
 After  a series of papers by De Lellis and  Szekelyhidi \cite{[DS0],[DS1],[DS2],[DS3]},  Isett \cite{[Isett]} successfully solved the second part of Onsager's conjecture in three-dimensional space.
 At this stage,  the progress of  Onsager's conjecture of the 3D incompressible Euler equations  \eqref{Euler} is
satisfactory.

 Note that the density $\varrho$ of the flow in Euler equations  \eqref{Euler} is a constant. A natural extension of Onsager's conjecture is to consider   critical regularity of   conservation of energy in the density-dependent (nonhomogeneous) Euler system and the compressible Euler equations.
 Important progress involving   generalized  Onsager's conjecture in both equations has been made (see \cite{[CY],[NNT1],[ADSW],[EGSW],[LS],[DE]} and   references therein).
Before we state these results, we recall the  nonhomogeneous  Euler system \eqref{NEuler}
\be\left\{\ba\label{NEuler}
&\varrho_t+\nabla \cdot (\varrho v)=0, \\
&(\varrho v)_{t} +\Div(\varrho v\otimes v)+\nabla
\pi=0, \\
&\Div v=0,
 \ea\right.\ee
 and the compressible Euler equations  \eqref{CEuler},
\be\left\{\ba\label{CEuler}
&\varrho_t+\nabla \cdot (\varrho v)=0, \\
&(\varrho v)_{t} +\Div(\varrho v\otimes v)+\nabla
\pi(\varrho )=0,\\
&\pi(\varrho)=\kappa\varrho^{\gamma}, \gamma>1, \kappa=\f{(\gamma-1)^{2}}{4\gamma},
\ea\right.\ee
  In general, one complements equations \eqref{NEuler} or  \eqref{CEuler}  with initial data
\begin{equation}\label{INS1}
	\varrho(0,x)=\varrho_0(x),\ (\varrho v)(0,x)=(\varrho_0 v_0)(x),\ x\in \Omega,
\end{equation}
where we define $v_0 =0$ on the sets $\{x\in \Omega:\ \varrho_0=0\}.$ In the present paper, we consider the periodic case, which means $\Omega=\mathbb{T}^d$ with dimension $d\geq 2$.

In particular, for the density-dependent  Euler equations   \eqref{NEuler},  by means of Constantin-E-Titi type
commutators on mollifying kernel,   Feireisl-Gwiazda-Gwiazda-Wiedemann \cite{[EGSW]} first  established the following  energy conservation criterion of weak solutions: if  the weak solutions satisfy
$$
v \in B_{p, \infty}^{\alpha} ((0, T) \times \mathbb{T}^{d} ), \quad \varrho, \varrho v \in B_{\f{p}{p-2}, \infty}^{\beta} ((0, T) \times \mathbb{T}^{d} ), \quad \pi \in L_{l o c}^{\f{p}{p-1}} ((0, T) \times \mathbb{T}^{d} ),
$$
with  $2 \alpha+\beta>1$ and $0 \leqq \alpha, \beta \leqq 1$,
then the energy is locally conserved. Introducing the  Lions type
commutators on mollifying kernel,    Chen and Yu \cite{[CY]} derived the sufficient conditions for
 weak solutions of the nonhomogeneous  Euler equations keeping energy  as follow:
If there holds
\be\ba\label{cy}
&\varrho \in L^{\infty} ([0, T] \times \mathbb{T}^{d} ) \cap L^{\f{q}{q-3}} (0, T ; W^{1, \f{q}{q-3}} (\mathbb{T}^{d} ) ), \varrho_{t} \in L^{\f{q}{q-3}} ([0, T] \times \mathbb{T}^{d} ),\\
&v\in L^{q} (0, T ; B_{q, \infty}^{\alpha} (\mathbb{T}^{d} ) ), \alpha>1/3.
 \ea\ee
 Then the energy is conserved.

For the isentropic compressible   Euler equations   \eqref{CEuler},
Feireisl-Gwiazda-Gwiazda-Wiedemann   \cite{[EGSW]}  show that
$$\ba
&\quad 0 \leq \underline{\varrho} \leq \varrho \leq \bar{\varrho} \text { a.e. in }(0, T) \times \mathbb{T}^{d},\pi \in C^{2}[\varrho, \bar{\varrho}]\\
&\varrho, \varrho v \in B_{3, \infty}^{\beta} ((0, T) \times \mathbb{T}^{d} ), v \in B_{3, \infty}^{\alpha} ((0, T) \times \mathbb{T}^{d} ),
\ea$$
 and $0 \leq \alpha, \beta \leq 1$ such that
$
\beta>\max  \{1-2 \alpha, \frac{1}{2}(1-\alpha) \}
$   guarantee that the energy of weak solutions is locally conserved.
Very recently, the  hypothesis $\pi\in C^{2}$ was improved to $\pi\in C^{1,\gamma-1}$ with $1\leq\gamma<2$ by Akramov-Debiec-Skipper-Wiedemann  in \cite{[ADSW]}.  Moreover, under the density $\varrho \in L^{\infty} ([0, T] \times \mathbb{T}^{d} ) $,
sufficient conditions    for   weak solutions of system \eqref{NEuler} and \eqref{CEuler} to conserve the energy can be found in \cite{[NNT1],[LS],[BGSTW],[GMS]}.

From the results  mentioned above, almost sufficient conditions involving  non-constant density for persistence of energy requires that
$\varrho\in L^{\infty}(0,T;L^{\infty}(\mathbb{T}^d))$.  This can be derived from $\Div v=0$ in  the nonhomogeneous Euler system. However, for
 compressible case, the
 natural regularity of the density is only
$\varrho\in L^{\infty}(0,T;L^{\gamma}(\mathbb{T}^d))$.
A natural question is that under the hypothesis
$\varrho\in L^{\infty}(0,T;L^{l} (\mathbb{T}^d))$ with $l<\infty$ rather than $\varrho\in L^{\infty}(0,T;L^{\infty}(\mathbb{T}^d))$, what is the minimum regularity to guarantee the energy   balance in isentropic compressible Euler equations \eqref{CEuler}.

We state our first result for   isentropic compressible Euler equations \eqref{CEuler} away from vacuum  as follows.
\begin{theorem}\label{the1.1}
 Let the pair $ (\varrho,v)$ be a  weak solution of isentropic compressible Euler equations  in the sense of definition \ref{wsdefi}. Suppose that   the density $\varrho$ and the velocity $v$ satisfy \be\ba\label{wyy1}
 &0<c\leq \varrho\in L^{\max\{\f{p}{p-3},\f{p(\gamma-1)}{2}\} } (0,T; L^{\max\{\f{q}{q-3},\f{q(\gamma-1)}{2}\} }(\mathbb{T}^{d})),  \\& \nabla  \varrho\in L^{\f{p}{p-3}} (0,T;L^{\f{q}{q-3}}(\mathbb{T}^{d})) ,     \varrho_{t} \in L^{\f{p}{p-3}} (0,T;L^{ \f{q}{q-3}}(\mathbb{T}^{d})),  \\
 &v\in L^{p}(0,T;B^{\alpha}_{q,\infty}(\mathbb{T}^{d})) , \alpha>1/3, p,q>3.
 \ea\ee
 Then the energy is locally conserved, that is,
 \begin{equation}\label{EI}\ba
   \int_{\mathbb{T}^d}\partial_{t}\left( \frac{1}{2}\varrho |v|^2+\kappa\f{\varrho^{\gamma}}{\gamma-1} \right) dx =0,	
\ea
\end{equation}
in the sense of distributions on $(0,T)$.
\end{theorem}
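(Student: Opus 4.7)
The strategy is to mollify the equations in space, derive an approximate energy identity, and pass to the limit $\varepsilon\downarrow 0$. With $\eta_\varepsilon$ a standard symmetric spatial mollifier and $f^\varepsilon:=f\ast\eta_\varepsilon$, convolving \eqref{CEuler} produces
\[
\partial_t\varrho^\varepsilon+\Div(\varrho v)^\varepsilon=0,\qquad \partial_t(\varrho v)^\varepsilon+\Div(\varrho v\otimes v)^\varepsilon+\nabla\pi(\varrho)^\varepsilon=0,
\]
which are classical in space and distributional in time. I would exploit two commutators: the Constantin--E--Titi type $R^\varepsilon:=(\varrho v\otimes v)^\varepsilon-v^\varepsilon\otimes(\varrho v)^\varepsilon$ on the convection term, and the Lions type $r^\varepsilon:=(\varrho v)^\varepsilon-\varrho^\varepsilon v^\varepsilon$ on the momentum.

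Pairing the mollified momentum equation with $v^\varepsilon$, using the elementary identity
\[
\partial_t(\varrho v)^\varepsilon\cdot v^\varepsilon-\tfrac12\partial_t\varrho^\varepsilon|v^\varepsilon|^2=\partial_t\bigl(\tfrac12\varrho^\varepsilon|v^\varepsilon|^2\bigr)+\partial_t(r^\varepsilon\cdot v^\varepsilon)-r^\varepsilon\cdot\partial_t v^\varepsilon,
\]
testing the mollified continuity equation against $\tfrac12|v^\varepsilon|^2$ and against $P'(\varrho^\varepsilon)$ with $P(\varrho)=\kappa\varrho^\gamma/(\gamma-1)$, and integrating against $\phi\in C_c^\infty((0,T))$ with an integration by parts in $t$, one is led to an identity of the form
\[
-\!\int_0^T\!\phi'(t)\!\int_{\mathbb{T}^d}\!\bigl(\tfrac12\varrho^\varepsilon|v^\varepsilon|^2+P(\varrho^\varepsilon)\bigr)\,dx\,dt=\mathcal{I}_R^\varepsilon+\mathcal{I}_\pi^\varepsilon+\mathcal{I}_r^\varepsilon,
\]
where $\mathcal{I}_R^\varepsilon$ gathers $R^\varepsilon:\nabla v^\varepsilon$, $\mathcal{I}_\pi^\varepsilon$ gathers the pressure commutators $(\pi(\varrho)^\varepsilon-\pi(\varrho^\varepsilon))\Div v^\varepsilon$ and $\Div r^\varepsilon\cdot P'(\varrho^\varepsilon)$, and $\mathcal{I}_r^\varepsilon$ gathers the Lions remainders $r^\varepsilon\cdot v^\varepsilon$ and $r^\varepsilon\cdot\partial_t v^\varepsilon$.

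The remaining task is to show $\mathcal{I}_R^\varepsilon,\mathcal{I}_\pi^\varepsilon,\mathcal{I}_r^\varepsilon\to 0$ as $\varepsilon\downarrow 0$. For $\mathcal{I}_R^\varepsilon$, the Besov hypothesis $v\in L^p(B^\alpha_{q,\infty})$ with $\alpha>1/3$ gives $\|R^\varepsilon\|_{L^{p/3}L^{q/3}}\lesssim\varepsilon^{2\alpha}$ and $\|\nabla v^\varepsilon\|_{L^pL^q}\lesssim\varepsilon^{\alpha-1}$; H\"older against $\varrho\in L^{p/(p-3)}L^{q/(q-3)}$ (conjugate to $L^{p/3}L^{q/3}$ via $\tfrac3p+\tfrac{p-3}p=1$) yields a net decay $\varepsilon^{3\alpha-1}\to 0$. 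For $\mathcal{I}_\pi^\varepsilon$, the polynomial form $\pi(\varrho)=\kappa\varrho^\gamma$ together with $\varrho\in L^{p(\gamma-1)/2}L^{q(\gamma-1)/2}$ (so $\varrho^{\gamma-1}\in L^{p/2}L^{q/2}$) controls the pressure commutator through a quadratic-in-$v$ Constantin--E--Titi estimate---this is exactly the role of the second maximum in \eqref{wyy1}. For $\mathcal{I}_r^\varepsilon$, the Lions identity
\[
r^\varepsilon(x)=\!\int\!\eta_\varepsilon(y)[\varrho(x-y)-\varrho^\varepsilon(x)][v(x-y)-v^\varepsilon(x)]\,dy,
\]
combined with $\nabla\varrho\in L^{p/(p-3)}L^{q/(q-3)}$, gives $\|r^\varepsilon\|\lesssim\varepsilon$ and disposes of the $r^\varepsilon\cdot v^\varepsilon$ piece. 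The single hardest point---and the main obstacle---is $\int r^\varepsilon\cdot\partial_t v^\varepsilon$: the lower bound $\varrho\ge c>0$ allows one to rewrite $\partial_t v=-v\cdot\nabla v-\nabla\pi(\varrho)/\varrho$ from the momentum equation, after which the assumed $\varrho_t\in L^{p/(p-3)}L^{q/(q-3)}$ and the same H\"older book-keeping close the estimate. More broadly, the obstacle is the absence of any $L^\infty$ bound on $\varrho$: each commutator must be split so that cubic powers of $v$ (from convection) and quadratic powers (from pressure) are balanced by the correct conjugate integrability of $\varrho$, $\nabla\varrho$, $\varrho_t$, or $\varrho^{\gamma-1}$, and the two maxima in \eqref{wyy1} are precisely what is needed to close both balances simultaneously. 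Once the remainders vanish, strong $L^1_{t,x}$ convergence of $\tfrac12\varrho^\varepsilon|v^\varepsilon|^2$ and $P(\varrho^\varepsilon)$ delivers \eqref{EI} in $\mathcal{D}'((0,T))$.
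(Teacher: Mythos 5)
Your overall strategy (spatial mollification, Constantin--E--Titi commutator for convection, Lions commutator for the momentum, the $\varrho^{\gamma-1}\in L^{p/2}L^{q/2}$ balance for the pressure) matches the paper, and your treatment of $\mathcal{I}_R^\varepsilon$ and $\mathcal{I}_\pi^\varepsilon$ is essentially the paper's. But there is a genuine gap at exactly the point you flag as hardest: the term $\int_0^T\!\!\int \phi\, r^\varepsilon\cdot\partial_t v^\varepsilon$. Your fix --- rewriting $\partial_t v=-v\cdot\nabla v-\nabla\pi(\varrho)/\varrho$ --- is not available here. A weak solution in the sense of Definition 2.1 only satisfies the conservative form, and for $v\in L^p(B^\alpha_{q,\infty})$ with $\alpha>1/3$ and $\Div v\neq 0$ the products $v\cdot\nabla v$ (equivalently $v\,\Div v$, a product of regularity $\alpha$ with regularity $\alpha-1$, which needs $\alpha>1/2$ to be defined by paraproducts) and $\varrho^{-1}\Div(\varrho v\otimes v)$ are not well-defined distributions; nor does the assumed integrability of $\varrho_t,\nabla\varrho$ give you $\partial_t v^\varepsilon$ in any Lebesgue space uniformly in $\varepsilon$. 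So ``the same H\"older book-keeping'' does not close this term, and the argument as written does not go through.

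The source of the problem is that you mollified the density and put $\tfrac12\varrho^\varepsilon|v^\varepsilon|^2$ in the energy, which forces $r^\varepsilon=(\varrho v)^\varepsilon-\varrho^\varepsilon v^\varepsilon$ and hence a stray $\partial_t v^\varepsilon$ that is not part of an exact derivative. The paper avoids this entirely by keeping $\varrho$ unmollified: it uses the Lions commutator in the asymmetric form $\partial_t(\varrho v)^\varepsilon-\partial_t(\varrho\, v^\varepsilon)$ (Lemma \ref{pLions} controls precisely $\partial(fg)^\varepsilon-\partial(f\,g^\varepsilon)$, not $\partial(fg)^\varepsilon-\partial(f^\varepsilon g^\varepsilon)$), so that the only time derivative of $v^\varepsilon$ ever produced is
\begin{equation*}
v^\varepsilon\cdot\partial_t(\varrho\, v^\varepsilon)=\varrho_t|v^\varepsilon|^2+\varrho\,\partial_t\tfrac{|v^\varepsilon|^2}{2}=\partial_t\bigl(\tfrac12\varrho|v^\varepsilon|^2\bigr)+\tfrac12\varrho_t|v^\varepsilon|^2,
\end{equation*}
i.e.\ an exact derivative plus a term controlled by the hypothesis $\varrho_t\in L^{p/(p-3)}(L^{q/(q-3)})$; the energy density is then $\tfrac12\varrho|v^\varepsilon|^2+\kappa\varrho^\gamma/(\gamma-1)$ with unmollified $\varrho$, and only $v^\varepsilon\to v$ needs to be passed to the limit. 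If you want to keep your symmetric commutators, you must either reorganize so that $\partial_t v^\varepsilon$ only appears contracted with $v^\varepsilon$ (hence as $\partial_t|v^\varepsilon|^2$), or switch to the paper's asymmetric Lions commutator; as it stands, the step is missing, not merely unpolished.
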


\begin{remark}
By small modification in the proof of Theorem \ref{the1.1}, our result also holds for $p=3$, $q=3$. Then taking $p=q=3$ in \eqref{wyy1}, one derives sufficient conditions for local energy balance
of weak solutions
$$\ba
&0<c\leq\varrho\in L^{\infty} (0,T; L^{\infty }(\mathbb{T}^{d})),   \nabla \varrho\in L^{\infty} (0,T;L^{\infty}(\mathbb{T}^{d})) ,     \varrho_{t} \in L^{\infty} (0,T;L^{ \infty}(\mathbb{T}^{d})),  \\
 &v\in L^{3}(0,T;B^{\alpha}_{3,\infty}(\mathbb{T}^{d})) , \alpha>1/3,
 \ea$$
which is an analogue of classical  Constantin-E-Titi theorem for the Onsager conjecture of the 3D compressible  Euler equations with the density away from vacuum.
\end{remark}
\begin{remark} This theorem implies    that lower integrability of the density  $\varrho$ means that more integrability of the velocity  $v$ are necessary in energy conservation of the  isentropic compressible Euler equations and the inverse is also true.
\end{remark}
\begin{remark}
It seems that this is the first  energy conservation criterion for the compressible Euler equations, which allows the  density to require its upper bound.
\end{remark}
\begin{remark}
All results here can be viewed as addressing  what is the minimum regularity posed on both the density and the velocity to guarantee the energy   balance in isentropic compressible Euler equations \eqref{CEuler}. \end{remark}
\begin{coro}
The energy conservation \eqref{EI} is valid provided   one of the following two conditions is
satisfied
 \begin{enumerate}[(1)]
 \item $ \varrho\in L^{\f{p}{p-3}} (0,T;L^{\f{q}{q-3}}(\mathbb{T}^{d})), \varrho_{t}\in L^{ \f{p}{p-3}  } (0,T; L^{ \f{q}{q-3}  }(\mathbb{T}^{d})),   \nabla\varrho\in L^{\f{p}{p-3}} (0,T;L^{\f{q}{q-3}}(\mathbb{T}^{d})),$
\be\ba
 &v\in L^{p}(0,T;B^{\alpha}_{q,\infty}(\mathbb{T}^d)) , \alpha>1/3,3\leq p,q\leq\f{3\gamma-1}{\gamma-1}.
\ea \ee
 \item $\varrho\in L^{ \f{3\gamma-1}{2}  } (0,T;L^{\f{3\gamma-1}{2}}(\mathbb{T}^{d})),  \varrho_{t}\in L^{ \f{3\gamma-1}{2}  } (0,T; L^{\f{3\gamma-1}{2}}(\mathbb{T}^d))),    \nabla \varrho\in L^{ \f{3\gamma-1}{2}  } (0,T;L^{\f{3\gamma-1}{2}}(\mathbb{T}^{d})),      $
     \be\ba
&v\in L^{\f{3\gamma-1}{\gamma-1}}(0,T;B^{\alpha}_{\f{3\gamma-1}{\gamma-1},\infty}(\mathbb{T}^{d})) , \alpha>1/3.
\ea \ee
\end{enumerate}
\end{coro}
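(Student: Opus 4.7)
The plan is to reduce both parts of the corollary to a direct invocation of Theorem \ref{the1.1}, by pinpointing which branch of the two maxima in \eqref{wyy1} is active. A brief algebraic manipulation shows that $p/(p-3)\geq p(\gamma-1)/2$ if and only if $p\leq (3\gamma-1)/(\gamma-1)$, with equality precisely when $p=(3\gamma-1)/(\gamma-1)$, at which point both quantities coincide with $(3\gamma-1)/2$. The analogous statement holds with $q$ in place of $p$, so $(3\gamma-1)/(\gamma-1)$ is the natural pivot exponent separating the two regimes.

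For part (1), the restriction $3\leq p,q\leq (3\gamma-1)/(\gamma-1)$ forces both maxima in \eqref{wyy1} to be attained at the first argument, so the integrability exponents demanded of $\varrho$ collapse to $p/(p-3)$ in time and $q/(q-3)$ in space. The hypothesis of Theorem \ref{the1.1} then becomes identical to condition (1), and I would simply apply the theorem. The boundary case $p=q=3$ is handled by the extension recorded in the remark following Theorem \ref{the1.1}, where $p/(p-3)$ is interpreted as $\infty$.

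For part (2), I would specialize Theorem \ref{the1.1} to the critical choice $p=q=(3\gamma-1)/(\gamma-1)$: the two arguments of each maximum coincide at $(3\gamma-1)/2$, the velocity Besov exponent becomes $(3\gamma-1)/(\gamma-1)$, and the integrability requirements on $\varrho$, $\varrho_t$, and $\nabla\varrho$ reduce exactly to $L^{(3\gamma-1)/2}$ in both time and space, which matches (2) verbatim. No genuine analytic obstacle appears since the corollary is essentially a parameter translation of Theorem \ref{the1.1}; the only real step is the algebraic bookkeeping identifying $(3\gamma-1)/(\gamma-1)$ as the threshold, and the standing assumption $\varrho\geq c>0$ is tacitly inherited from Theorem \ref{the1.1}.
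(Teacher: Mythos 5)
Your proposal is correct and matches the paper's (implicit) argument: the paper gives no separate proof of this corollary, which is indeed just the specialization of Theorem \ref{the1.1} obtained by noting that $\f{p}{p-3}\geq\f{p(\gamma-1)}{2}$ exactly when $p\leq\f{3\gamma-1}{\gamma-1}$, with the two branches coinciding at $\f{3\gamma-1}{2}$ at the threshold, plus the remark covering the endpoint $p=q=3$. Your algebraic identification of the pivot exponent and the tacit inheritance of $\varrho\geq c>0$ are both exactly what is intended.
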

Next, we extend the energy conservation up to the initial time. In this case, the more condition posed on density are needed. The   precise result is the following.
\begin{theorem}\label{the1.3}
 Let the pair $ (\varrho,v)$ be a  weak solution    in the sense of definition \ref{wsdefi}. Assume  that there holds
 \be\ba\label{wyy2}
 &0<c\leq \varrho\in L^{k } (0,T; L^{l}(\mathbb{T}^{d})), \\&  k \geq \max\{\f{p}{p-3},\f{p(\gamma-1)}{2},\frac{(\gamma-1)(d+q)p}{2q-d(p-3)}\}, l\geq \max\{\f{q}{q-3},\f{q(\gamma-1)}{2}\},  \\& \nabla \sqrt{\varrho}\in {L^{\f{2kp}{2k(p-3)-p}}(0,T;L^{\f{2lq}{2l(q-3)-q}}(\mathbb{T}^d))} ,     \partial_t \sqrt{\varrho}\in {L^{\f{2kp}{2k(p-3)-p}}(0,T;L^{\f{2lq}{2l(q-3)-q}}(\mathbb{T}^d))},  \\
 &v\in L^{p}(0,T;B^{\alpha}_{q,\infty}(\mathbb{T}^{d})) , \alpha>1/3, p,q>  3,q> \frac{d(p-3)}{2},\ and\ v_0\in L^{\max\{\frac{2\gamma}{\gamma -1}, \frac{q}{2}\}}(\mathbb{T}^d).
 \ea\ee
 Then the energy  is globally conserved, namely, for any $t\in[0,T]$,
 	\begin{equation}\label{energyeq}
 	\begin{aligned}
 		E(t)  = E(0), 		\end{aligned}\end{equation}
 where $E(t)=\int_{\mathbb{T}^d}\left( \frac{1}{2}\varrho |v|^2+\kappa\f{\varrho^{\gamma}}{\gamma-1} \right) dx$.
\end{theorem}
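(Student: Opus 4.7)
The plan is to reduce Theorem \ref{the1.3} to Theorem \ref{the1.1} and then upgrade the resulting local identity to a global one by a limiting argument at $t=0^{+}$. The first step exploits the fact that the hypothesis $\varrho\geq c>0$ allows one to transfer derivatives from $\varrho$ to $\sqrt{\varrho}$. From $\nabla\varrho=2\sqrt{\varrho}\,\nabla\sqrt{\varrho}$ and $\partial_{t}\varrho=2\sqrt{\varrho}\,\partial_{t}\sqrt{\varrho}$, the Hölder exponents in \eqref{wyy2} are chosen precisely so that
\[
\tfrac{p-3}{p}=\tfrac{1}{2k}+\tfrac{2k(p-3)-p}{2kp},\qquad \tfrac{q-3}{q}=\tfrac{1}{2l}+\tfrac{2l(q-3)-q}{2lq},
\]
which, together with $\varrho\in L^{k}(0,T;L^{l}(\mathbb{T}^{d}))$, i.e.\ $\sqrt{\varrho}\in L^{2k}(0,T;L^{2l}(\mathbb{T}^{d}))$, yields by Hölder's inequality that $\nabla\varrho,\partial_{t}\varrho\in L^{p/(p-3)}(0,T;L^{q/(q-3)}(\mathbb{T}^{d}))$. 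The lower bounds $k\geq p/(p-3)$, $l\geq q/(q-3)$ also embed $\varrho$ itself into the space required by \eqref{wyy1}, while $l\geq q(\gamma-1)/2$ and $k\geq p(\gamma-1)/2$ handle the pressure term $\varrho^{\gamma-1}$. Thus all hypotheses of Theorem \ref{the1.1} hold and the distributional identity \eqref{EI} is valid on $(0,T)$, giving a constant $c^{\star}$ with $E(t)=c^{\star}$ for a.e.\ $t\in(0,T)$.

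It remains to prove $\lim_{t\to 0^{+}}E(t)=E(0)$. For the potential part, apply the DiPerna--Lions renormalization to the continuity equation (using the regularity of $\nabla\sqrt{\varrho}$ and $\partial_{t}\sqrt{\varrho}$ transferred to $\varrho$ in Step 1) to obtain strong $L^{\gamma}$-continuity of $\varrho$ at $t=0^{+}$, hence $\int_{\mathbb{T}^{d}}\varrho^{\gamma}\,dx\to\int_{\mathbb{T}^{d}}\varrho_{0}^{\gamma}\,dx$. For the kinetic part, the momentum equation yields weak-$\ast$ continuity of $\varrho v$ into an appropriate Lebesgue space; combined with the strong convergence of $\varrho$, this gives weak $L^{2}$-convergence of $\sqrt{\varrho}\,v$ to $\sqrt{\varrho_{0}}\,v_{0}$ and hence by lower semicontinuity $E(0)\leq\liminf_{t\to 0^{+}}E(t)$. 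The finiteness of $E(0)$ is ensured by $v_{0}\in L^{\max\{2\gamma/(\gamma-1),\,q/2\}}$: the first exponent makes $\varrho_{0}|v_{0}|^{2}\in L^{1}$ through $\varrho_{0}\in L^{\gamma}$, and the second matches the spatial norm in which $v$ is controlled by the Besov embedding $B^{\alpha}_{q,\infty}\hookrightarrow L^{q}$.

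The matching upper bound $\limsup_{t\to 0^{+}}E(t)\leq E(0)$ is the main obstacle and the reason for the additional assumptions. I would obtain it by testing a time-mollified form of the local energy balance against a sharp cutoff approximating $\chi_{[0,\tau]}$ and passing to the limit in the mollification parameter first and the cutoff second. The extra bound $k\geq(\gamma-1)(d+q)p/(2q-d(p-3))$ together with $q>d(p-3)/2$ is precisely the exponent threshold that, via the Sobolev embedding of $\sqrt{\varrho}\in W^{1,2lq/(2l(q-3)-q)}$ in space, provides the uniform spatial control of $\sqrt{\varrho}$ on $[0,\tau]$ needed to make the Constantin--E--Titi and Lions commutator error terms vanish in the double limit. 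Combining the two inequalities gives $E(t)\to E(0)$ as $t\to 0^{+}$, so $c^{\star}=E(0)$ and the energy is globally conserved. The genuinely delicate point is therefore not the local identity, which is essentially Theorem \ref{the1.1} in disguise, but the passage from weak to strong convergence of $\sqrt{\varrho}\,v$ in $L^{2}(\mathbb{T}^{d})$ at $t=0^{+}$.
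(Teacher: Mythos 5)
Your first step --- transferring the hypotheses on $\nabla\sqrt{\varrho}$ and $\partial_{t}\sqrt{\varrho}$ to $\nabla\varrho$ and $\partial_{t}\varrho$ via $\nabla\varrho=2\sqrt{\varrho}\,\nabla\sqrt{\varrho}$ and H\"older, so that the interior identity of Theorem \ref{the1.1} applies --- is exactly the paper's estimates \eqref{3.91}--\eqref{3.92}, and that part is fine. The gap is in the attachment at $t=0$. You correctly identify that one needs strong $L^{2}$ convergence of $\sqrt{\varrho}\,v(t)$ to $\sqrt{\varrho_{0}}\,v_{0}$, but you leave it unproved (you call it ``the genuinely delicate point'') and your proposed route to the upper bound $\limsup_{t\to0^{+}}E(t)\leq E(0)$ cannot work as stated: the local energy balance is only valid against test functions compactly supported in $(0,T)$, so testing it against cutoffs approximating $\chi_{[0,\tau]}$ only relates $E$ at two \emph{interior} times and carries no information about the initial datum. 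The missing ingredient is item (iii) of Definition \ref{wsdefi}: the energy \emph{inequality} $E(t)\leq E(0)$ is part of the definition of weak solution, so the upper bound is free. The paper's argument \eqref{c16} then combines this inequality with the weak continuity $\varrho v\in C([0,T];L^{2\gamma/(\gamma+1)}_{\mathrm{weak}})$ (from the momentum equation) and the \emph{strong} temporal continuity of $\varrho^{\gamma}$ and $\sqrt{\varrho}$ to squeeze $\overline{\lim}_{t\to0}\|\sqrt{\varrho}\,v-\sqrt{\varrho_{0}}v_{0}\|_{L^{2}}^{2}\leq0$, which is precisely the strong convergence you need; without invoking the energy inequality this squeezing argument does not close.

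A secondary inaccuracy: you attribute the extra exponent condition $k\geq(\gamma-1)(d+q)p/(2q-d(p-3))$ and $q>d(p-3)/2$ to making commutator errors vanish in a double limit. In the paper these conditions play no role in the commutator estimates; they are used in the Aubin--Lions compactness step (Lemma \ref{AL}) to upgrade the bounds on $\nabla\varrho^{\gamma},\partial_{t}\varrho^{\gamma},\nabla\sqrt{\varrho},\partial_{t}\sqrt{\varrho}$ to the strong continuity statements $\varrho^{\gamma}\in C([0,T];L^{lq/(l(q-3)+(\gamma-1)q)})$ and $\sqrt{\varrho}\in C([0,T];L^{2lq/(2l(q-3)-q)})$ that feed into \eqref{c16}. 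Your suggestion of obtaining strong $L^{\gamma}$ continuity of $\varrho$ from DiPerna--Lions renormalization alone is not what the paper does and would not by itself supply the continuity of $\sqrt{\varrho}$ in the norm needed to handle the cross term $\int v_{0}\sqrt{\varrho}\,v(\sqrt{\varrho}-\sqrt{\varrho_{0}})\,dx$.
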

\begin{remark}
	It should be noted that this result also holds for $p=3$, $q=3$ just by small modification in the proof of Theorem \ref{the1.3}.
\end{remark}

\begin{coro}
Let the weak solution $(\varrho,v)$ of compressible Euler equations meets one of the following condition
 \begin{enumerate}[(1)]
 \item  $0<c\leq \varrho \in L^\infty(0,T;L^{\frac{q}{q-3}})$,\
   $\partial_t\sqrt{\varrho}\in L^{\infty } (0,T; L^{ \f{q}{q-3}  }(\mathbb{T}^{d})),   \nabla \sqrt{\varrho}\in L^{\infty} (0,T;L^{\f{q}{q-3}}(\mathbb{T}^{d})),$
\be\ba
 &v\in L^{3}(0,T;B^{\alpha}_{q,\infty}(\mathbb{T}^d)) , \alpha>1/3,\ 3\leq q\leq\f{3\gamma-1}{\gamma-1}\ and\ v_0\in L^2(\mathbb{T}^d).
\ea \ee
 \item  $0<c\leq \varrho \in L^\infty(0,T;L^{\frac{3\gamma-1}{2}})$,\
 $\partial_t\sqrt{\varrho}\in L^{\infty } (0,T; L^{ \frac{3\gamma-1}{2}  }(\mathbb{T}^{d})),   \nabla \sqrt{\varrho}\in L^{\infty} (0,T;L^{\frac{3\gamma-1}{2}}),$
     \be\ba
&v\in L^{3}(0,T;B^{\alpha}_{\f{3\gamma-1}{\gamma-1},\infty}(\mathbb{T}^{d})) , \alpha>1/3, \ and\ v_0\in L^2(\mathbb{T}^d).
\ea \ee
Then the relation   \eqref{energyeq} is valid.
\end{enumerate}
\end{coro}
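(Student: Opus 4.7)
The plan is to upgrade Theorem~\ref{the1.1} from local energy conservation in $\mathcal{D}'(0,T)$ to the global identity $E(t)=E(0)$ on $[0,T]$ by combining the existing mollification/commutator analysis with a temporal cutoff that includes the endpoint $t=0$.

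First, I would show that the assumptions \eqref{wyy2} already imply the hypotheses \eqref{wyy1}, so that \eqref{EI} holds in the sense of distributions on $(0,T)$. Because $\varrho\geq c>0$, the identities $\nabla\varrho=2\sqrt{\varrho}\,\nabla\sqrt{\varrho}$ and $\partial_t\varrho=2\sqrt{\varrho}\,\partial_t\sqrt{\varrho}$, combined with H\"older's inequality between $\varrho\in L^{k}_{t}L^{l}_{x}$ (equivalently $\sqrt{\varrho}\in L^{2k}_{t}L^{2l}_{x}$) and the stated integrability of $\nabla\sqrt{\varrho},\partial_t\sqrt{\varrho}$, reproduce $\nabla\varrho,\partial_t\varrho\in L^{p/(p-3)}(0,T;L^{q/(q-3)}(\mathbb{T}^{d}))$ exactly. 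The exponents $2kp/(2k(p-3)-p)$ and $2lq/(2l(q-3)-q)$ are the H\"older conjugates designed for this, and the condition $q>d(p-3)/2$ keeps them positive.

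Next, to extend \eqref{EI} to the endpoint $t=0$, I would test the mollified momentum equation against $v^{\varepsilon}\eta_{\tau}(s)$, where $\eta_{\tau}$ is a smooth approximation of $\chi_{[0,t]}$ with $\eta_{\tau}(0)=1$, and couple it with the mollified continuity equation multiplied by $|v^{\varepsilon}|^{2}/2$ together with a renormalized identity for $\kappa\varrho^{\gamma}/(\gamma-1)$. Schematically this produces
\[
E^{\varepsilon}(t)-E^{\varepsilon}(0)=\int_{0}^{t}\!\!\int_{\mathbb{T}^{d}}\bigl(R_{1}^{\varepsilon}+R_{2}^{\varepsilon}+R_{3}^{\varepsilon}\bigr)\,dx\,ds+o(1)_{\tau\to 0},
\]
where $R_{1}^{\varepsilon}$ is the Constantin--E--Titi commutator attached to $(\varrho v\otimes v)^{\varepsilon}$, $R_{2}^{\varepsilon}$ is the Lions-type commutator obtained when $v^{\varepsilon}$ is paired with $(\varrho v)^{\varepsilon}$, and $R_{3}^{\varepsilon}$ is the pressure/internal-energy commutator. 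The estimates underlying Theorem~\ref{the1.1} then force each $R_{j}^{\varepsilon}\to 0$ in $L^{1}_{t,x}$ as $\varepsilon\to 0$, driven by the Besov regularity $\alpha>1/3$ of $v$ on one side and by $\sqrt{\varrho}$ together with $\nabla\sqrt{\varrho},\partial_{t}\sqrt{\varrho}$ on the density side.

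The main obstacle is ensuring that the boundary term actually converges to $E(t)-E(0)$ after the passages $\varepsilon\to 0$ and $\tau\to 0$. For finiteness of $E(0)$, the hypothesis $v_{0}\in L^{\max\{2\gamma/(\gamma-1),q/2\}}(\mathbb{T}^{d})$ is crucial: combined with $\varrho_{0}\in L^{l}$ and $l\geq q(\gamma-1)/2\geq\gamma$, H\"older's inequality yields $\varrho_{0}|v_{0}|^{2},\varrho_{0}^{\gamma}\in L^{1}(\mathbb{T}^{d})$. For the convergence itself, I would argue that the integrability of $\partial_{t}\sqrt{\varrho}$ gives strong continuity of $\sqrt{\varrho}$ in $t$, while the momentum equation and the uniform energy bound provide weak continuity of $\sqrt{\varrho}\,v$ valued in $L^{2}(\mathbb{T}^{d})$; the extra lower bound $k\geq(\gamma-1)(d+q)p/(2q-d(p-3))$, together with $q>d(p-3)/2$, is exactly what is needed so that $\sqrt{\varrho}v$ admits such a weakly continuous representative via Sobolev interpolation. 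Writing $\int\varrho|v|^{2}=\|\sqrt{\varrho}v\|_{L^{2}}^{2}$ and combining these continuity properties then passes the boundary term to $E(t)-E(0)$, which, together with the vanishing of the commutator errors, yields \eqref{energyeq}.
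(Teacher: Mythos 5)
Your proposal is correct and follows essentially the same route as the paper: the corollary is Theorem \ref{the1.3} specialized to $p=3$, $k=\infty$ and $l=\frac{q}{q-3}$ (resp.\ $l=\frac{3\gamma-1}{2}$), and every ingredient you describe --- H\"older's inequality recovering $\nabla\varrho,\partial_t\varrho$ from $\sqrt{\varrho}$ and its derivatives, the Constantin--E--Titi and Lions commutator estimates for local conservation, and the passage to $t=0$ via weak continuity of $\varrho v$, Aubin--Lions continuity of $\sqrt{\varrho}$ and $\varrho^{\gamma}$, and the energy inequality upgrading $\sqrt{\varrho}\,v$ to strong $L^{2}$ right-continuity --- is exactly the paper's argument. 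The only step left implicit is the elementary exponent check that $3\le q\le \frac{3\gamma-1}{\gamma-1}$ forces $\frac{q}{q-3}\ge \frac{q(\gamma-1)}{2}$, which is the whole point of the corollary's hypotheses.
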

\begin{remark}
	Here the  minimum   integrability  of the density in space  direction is $\f{3\gamma-1}{2}$ for energy conservation.
	It is an open problem to derive  energy conservation criterion under the condition
	$\varrho\in L^{\infty}(0,T;L^{\gamma}(\mathbb{T}^d))$.
\end{remark}
Finally, we consider    the energy conservation of the weak solutions of the
isentropic compressible Euler equations
allowing vacuum.
 \begin{theorem}\label{the1.5}
 Let the pair $ (\varrho,v)$ be a solution of  in the sense of definition \ref{wsdefi}. Suppose that there holds
 \be\ba\label{wy1v}
&0\leq \varrho\in L^{k } (0,T; L^{l}(\mathbb{T}^{d})),\\& k \geq \max\{\f{p}{p-3},\f{p(\gamma-1)}{2},\frac{(\gamma-1)(n+q)p}{2q-n(p-3)}\},\ l\geq \max\{\f{q}{q-3},\f{q(\gamma-1)}{2}\},  \\& \nabla \sqrt{\varrho}\in {L^{\f{2kp}{2k(p-3)-p}}(0,T;L^{\f{2lq}{2l(q-3)-q}}(\mathbb{T}^d))} ,     \partial_t \sqrt{\varrho}\in {L^{\f{2kp}{2k(p-3)-p}}(0,T;L^{\f{2lq}{2l(q-3)-q}}(\mathbb{T}^d))},  \\
 &v\in  B^{\beta}_{p,\infty}((0,T);B^{\alpha}_{q,\infty}(\mathbb{T}^{d})) , \beta\geq \alpha>1/3, p,q>  3,q> \frac{d(p-3)}{2},\ and\ v_0\in L^{\max\{\frac{2\gamma}{\gamma -1}, \frac{q}{2}\}}(\mathbb{T}^d).
 \ea\ee
Then the energy  is globally conserved, namely, for any $t\in[0,T]$,
\begin{equation}\label{energyeq-1}
	\begin{aligned}
		E(t)  = E(0), 		\end{aligned}\end{equation}
where $E(t)=\int_{\mathbb{T}^d}\left( \frac{1}{2}\varrho |v|^2+\kappa\f{\varrho^{\gamma}}{\gamma-1} \right) dx$.
\end{theorem}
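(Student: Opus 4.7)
The plan is to extend the scheme of Theorem \ref{the1.3} by working systematically with the square root $\sqrt{\varrho}$ rather than $\varrho$, so that the vacuum set $\{\varrho=0\}$ causes no trouble. Concretely, I mollify the compressible Euler system in both space and time with a tensor-product kernel $\omega_{\varepsilon}(x,t)=\eta_{\varepsilon}(t)\rho_{\varepsilon}(x)$, and test the mollified momentum equation against $v^{\varepsilon}$. This yields an approximate balance for $\tfrac{1}{2}|(\sqrt{\varrho}\,v)^{\varepsilon}|^{2}+\tfrac{\kappa}{\gamma-1}(\varrho^{\gamma})^{\varepsilon}$ modulo three commutator families that I control one by one, followed by strong convergence of $(\sqrt{\varrho}\,v)^{\varepsilon}\to\sqrt{\varrho}\,v$ in $L^{2}$ to recover $\tfrac{1}{2}\varrho|v|^{2}$ in the limit.

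First, the Constantin--E--Titi type commutator arising from $(\varrho v\otimes v)^{\varepsilon}-(\varrho v)^{\varepsilon}\otimes v^{\varepsilon}$ is reduced by the standard algebraic identity to trilinear integrals of increments of $v$. The Besov hypothesis $v\in B^{\beta}_{p,\infty}((0,T);B^{\alpha}_{q,\infty}(\mathbb{T}^{d}))$ with $\beta\geq\alpha>1/3$, combined with the embedding condition $q>d(p-3)/2$, produces the gain of order $\varepsilon^{3\alpha-1}\to 0$ that kills this remainder. Second, the Lions type commutators $(\varrho v)^{\varepsilon}-\varrho^{\varepsilon}v^{\varepsilon}$ and their $\partial_{t}$ analogues are handled by H\"older pairing the $L^{p}_{t}L^{q}_{x}$ Besov remainder of $v$ against $\nabla\sqrt{\varrho}$ and $\partial_{t}\sqrt{\varrho}$ in $L^{\tfrac{2kp}{2k(p-3)-p}}_{t}L^{\tfrac{2lq}{2l(q-3)-q}}_{x}$; these are exactly the conjugate exponents built into \eqref{wy1v}. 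Third, the pressure contribution $\int\varrho^{\gamma}\Div v^{\varepsilon}\,dxdt$ is controlled by interpolating $\varrho$ in $L^{k}_{t}L^{l}_{x}$ using $k\geq p(\gamma-1)/2$, $l\geq q(\gamma-1)/2$, and $k\geq(\gamma-1)(d+q)p/(2q-d(p-3))$, so that the remainder is of order $\varepsilon^{\alpha}$ times a bounded norm and vanishes in the limit.

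To extend the resulting local balance up to the initial time, I use the Besov-in-time regularity $v\in B^{\beta}_{p,\infty}$ with $\beta\geq\alpha>1/3$ to obtain uniform continuity of the mollified energy $E^{\varepsilon}(t)$ on $[0,T]$, and the assumption $v_{0}\in L^{\max\{2\gamma/(\gamma-1),\,q/2\}}(\mathbb{T}^{d})$ to ensure that $E(0)$ is well-defined and that $E^{\varepsilon}(0)\to E(0)$ as $\varepsilon\to 0$. Combined with the local conservation identity, this yields \eqref{energyeq-1} pointwise in $t\in[0,T]$.

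The main obstacle is the vacuum: without a positive lower bound on $\varrho$, the pointwise identity $v=(\varrho v)/\varrho$ is meaningless on $\{\varrho=0\}$, so every manipulation must be phrased in terms of $\sqrt{\varrho}$ and $\sqrt{\varrho}\,v$, and the classical Lions commutator lemma must be re-derived with $\sqrt{\varrho}$ playing the role of the transported coefficient. The delicate point is matching the scaling of the Besov increments of $v$ in space and time against the $\sqrt{\varrho}$-regularity exponents in \eqref{wy1v}, and this matching--rather than any single estimate--is what forces the precise form of the exponent conditions in \eqref{wy1v} and, in particular, the requirement $\beta\geq\alpha$.
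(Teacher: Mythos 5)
Your overall strategy (space--time mollification, testing against $v^{\varepsilon}$, commutator estimates, then passage to $t=0$) is the same as the paper's, but two steps as you describe them would fail. First, the trilinear term: you group it as $(\varrho v\otimes v)^{\varepsilon}-(\varrho v)^{\varepsilon}\otimes v^{\varepsilon}$ and claim the Constantin--E--Titi identity reduces it to ``trilinear integrals of increments of $v$.'' It does not: that identity produces increments of $\varrho v$ paired with increments of $v$, and controlling increments of $\varrho v$ would require Besov regularity of $\varrho v$, which is nowhere among the hypotheses \eqref{wy1v} (the density only has $\nabla\sqrt{\varrho},\partial_{t}\sqrt{\varrho}$ in Lebesgue spaces). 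The paper avoids this by a different decomposition: it first peels the density off via Lions-type commutators, writing $(\varrho v\otimes v)^{\varepsilon}-\varrho(v\otimes v)^{\varepsilon}$ and $(\varrho v)^{\varepsilon}-\varrho v^{\varepsilon}$ (controlled through $\nabla\varrho=2\sqrt{\varrho}\,\nabla\sqrt{\varrho}$ and $\partial_{t}\varrho=2\sqrt{\varrho}\,\partial_{t}\sqrt{\varrho}$ by H\"older, which is exactly where $k,l$ and the exponents $\tfrac{2kp}{2k(p-3)-p}$, $\tfrac{2lq}{2l(q-3)-q}$ enter), and only then applies the space--time CET commutator of Lemma \ref{lem2.3c} to the pure velocity expression $\varrho\,[(v\otimes v)^{\varepsilon}-v^{\varepsilon}\otimes v^{\varepsilon}]$, with $\varrho$ acting merely as an $L^{\frac{p}{p-3}}_{t}L^{\frac{q}{q-3}}_{x}$ weight. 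This separation is the actual content of the vacuum case; your grouping skips it. (Also, $q>d(p-3)/2$ has nothing to do with the $\varepsilon^{3\alpha-1}$ gain; see below.)

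Second, the passage to the initial time is not established by your argument. A space--time mollification is not even defined at $t=0$, and ``uniform continuity of $E^{\varepsilon}(t)$'' together with $E^{\varepsilon}(0)\to E(0)$ is essentially the conclusion restated, not a proof: nothing in the Besov-in-time regularity of $v$ prevents the energy from jumping at $t=0$, since $\varrho$ and $\varrho v$ are a priori only weakly continuous in time. The paper's route, which you omit entirely, is (i) weak continuity $\varrho v\in C([0,T];L^{\frac{2\gamma}{\gamma+1}}_{\mathrm{weak}})$ from the momentum equation; (ii) Aubin--Lions compactness giving strong temporal continuity of $\sqrt{\varrho}$ and $\varrho^{\gamma}$ --- this is precisely where the conditions $k\geq\frac{(\gamma-1)(d+q)p}{2q-d(p-3)}$ and $q>\frac{d(p-3)}{2}$ are used; (iii) the energy \emph{inequality} \eqref{energyineq} from Definition \ref{wsdefi}, combined with (i)--(ii), to show $\overline{\lim}_{t\to0}\int|\sqrt{\varrho}v-\sqrt{\varrho_{0}}v_{0}|^{2}dx=0$, hence strong $L^{2}$ right-continuity of $\sqrt{\varrho}v$; and (iv) a Lipschitz test function $\phi_{\tau}$ in the local balance to transfer this to $E(t_{0})=E(0)$. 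Without invoking the energy inequality of the weak solution, step (iii) cannot be carried out, so your proof of \eqref{energyeq-1} up to $t=0$ has a genuine gap.
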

\begin{remark}
	This result also holds for $p=3$, $q=3$ by small modification in the proof of Theorem \ref{the1.5}.
\end{remark}
\begin{coro}
	Let the weak solution $(\varrho,v)$ of compressible Euler equations meets one of the following conditions
	\begin{enumerate}[(1)]
		\item  $0\leq \varrho \in L^\infty(0,T;L^\infty(\mathbb{T}^d)), \nabla \sqrt{\varrho} \in L^\infty(0,T;L^\infty(\mathbb{T}^d)), \partial_t \sqrt{\varrho}\in  L^\infty(0,T;L^\infty(\mathbb{T}^d))$\\
		$v\in  B^{\beta}_{3,\infty}((0,T);B^{\alpha}_{3,\infty}(\mathbb{T}^{d})) , \beta\geq \alpha>1/3,\ and\ v_0\in L^{2}(\mathbb{T}^d).$
		\item
		$0\leq \varrho \in L^\infty(0,T;L^{\frac{q}{q-3}}(\mathbb{T}^d)),\nabla \sqrt{\varrho}\in L^\infty(0,T;L^{\frac{2q}{q-3}}(\mathbb{T}^d)), \partial_t\sqrt{\varrho}\in  L^\infty(0,T;L^{\frac{2q}{q-3}}(\mathbb{T}^d)),$
		$v\in B^\beta_{3,\infty}(0,T;B^\alpha_{q,\infty}(\mathbb{T}^d)), \beta\geq \alpha >1/3,\ 3< q\leq \frac{3\gamma-1}{\gamma-1}\ and\ v_0\in L^2(\mathbb{T}^d)$.
\item
	$0\leq \varrho \in L^\infty(0,T;L^{\frac{3\gamma-1}{2}}),\nabla \sqrt{\varrho}\in L^\infty(0,T;L^{\frac{2(3\gamma-1)}{2}}(\mathbb{T}^d)), \partial_t\sqrt{\varrho}\in  L^\infty(0,T;L^{\frac{2(3\gamma-1)}{2}}),$\\
	$v\in B^\beta_{3,\infty}(0,T;B^\alpha_{\frac{3\gamma-1}{\gamma-1},\infty}(\mathbb{T}^d)), \beta\geq \alpha >1/3,\ and\ v_0\in L^2(\mathbb{T}^d)$.

	\end{enumerate}
Then the relation   \eqref{energyeq-1} is valid.
\end{coro}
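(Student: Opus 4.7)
The plan is to derive this corollary directly from Theorem \ref{the1.5} (together with the $p=q=3$ modification noted in the preceding remark) by exhibiting, for each of the three hypothesized sets of conditions, a choice of the parameters $(p,q,k,l)$ in \eqref{wy1v} that reproduces the stated integrability exponents. No new analytic work is required beyond an arithmetic verification: one must check that the prescribed exponents for $\varrho$ realize the maxima $\max\{p/(p-3),\ p(\gamma-1)/2,\ (\gamma-1)(d+q)p/(2q-d(p-3))\}$ in time and $\max\{q/(q-3),\ q(\gamma-1)/2\}$ in space, and that the composite exponents $2kp/(2k(p-3)-p)$ and $2lq/(2l(q-3)-q)$ governing the regularity of $\nabla\sqrt{\varrho}$ and $\partial_t\sqrt{\varrho}$ simplify to the exponents appearing in each of the three items.

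For part~(1) I would take $p=q=3$, which forces $k=l=\infty$; the composite exponents degenerate at $p=q=3$ and are absorbed into the endpoint modification referenced in the remark, so the effective hypotheses are the $L^\infty_tL^\infty_x$ bounds on $\nabla\sqrt{\varrho}$ and $\partial_t\sqrt{\varrho}$. The Besov condition on $v$ transcribes directly, and $v_0\in L^2$ combined with $\varrho_0\in L^\infty$ gives the finite initial kinetic energy that the $v_0$-hypothesis of \eqref{wy1v} is designed to ensure. For part~(2) I would take $p=3$ and $3<q\le(3\gamma-1)/(\gamma-1)$. The upper bound on $q$ is algebraically equivalent to $q/(q-3)\ge q(\gamma-1)/2$, so setting $l=q/(q-3)$ saturates the required maximum in the spatial exponent for $\varrho$, while $k=\infty$ is again allowed by the $p=3$ modification. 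A short calculation shows that with this $l$,
\[
\frac{2lq}{2l(q-3)-q}=\frac{2q}{q-3},
\]
matching the exponent on $\nabla\sqrt{\varrho}$ and $\partial_t\sqrt{\varrho}$ in the statement.

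For part~(3) I would take $p=3$ and $q=(3\gamma-1)/(\gamma-1)$, the saturation of the range in part~(2). At this endpoint one has $q/(q-3)=q(\gamma-1)/2=(3\gamma-1)/2$, so $l=(3\gamma-1)/2$ simultaneously realizes both arguments of the space maximum, and the same arithmetic as above gives $2lq/(2l(q-3)-q)=3\gamma-1$, i.e.\ the exponent $2\cdot(3\gamma-1)/2$ listed. Once these parameter identifications are verified, \eqref{wy1v} is satisfied in each case and \eqref{energyeq-1} follows immediately from Theorem \ref{the1.5}. The only (minor) obstacle is the systematic interpretation of the degenerate composite exponents at $p=3$; this is precisely the content of the endpoint modification promised after Theorem \ref{the1.5}, so no new commutator or Besov estimate needs to be re-derived in the corollary's proof.
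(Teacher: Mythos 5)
Your proposal is correct and is exactly the intended derivation: the paper offers no separate argument for this corollary, which follows from Theorem \ref{the1.5} (with the $p=3$, $q=3$ endpoint modification noted in the preceding remark) by the parameter choices you make, and your exponent arithmetic — in particular $q\le\frac{3\gamma-1}{\gamma-1}\Leftrightarrow\frac{q}{q-3}\ge\frac{q(\gamma-1)}{2}$ and $\frac{2lq}{2l(q-3)-q}=\frac{2q}{q-3}$ when $l=\frac{q}{q-3}$ — checks out. The only caveat is inherited from the paper itself: the corollary lists $v_0\in L^2$ while Theorem \ref{the1.5} formally asks for $v_0\in L^{\max\{2\gamma/(\gamma-1),q/2\}}$, a mismatch in the source rather than a gap in your argument.
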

\begin{remark}
In the spirit of above theorems, we will consider the energy equality of weak solutions in the isentropic compressible Navier-Stokes equations without upper bound of the density in a forthcoming paper \cite{[YWY]}.
\end{remark}
We would like to mention that infinitely many weak solutions of the  isentropic compressible Euler equations have been constructed in \cite{[LXX],[CDK],[CVY],[CKMS]}. The  uniqueness of dissipative solutions  with regularity assumption to the
isentropic Euler equations can be found in \cite{[FGJ]}.

The rest of this paper is divided into three sections.
 In Section 2, we present the auxiliary lemmas including  the  Constantin-E-Titi type and Lions type commutators on mollifying kernel.
 Section 3 is devoted to  the Onsager conjecture on the energy conservation for the
isentropic compressible Euler equations.
\section{Notations and some auxiliary lemmas} \label{section2}

First, we introduce some notations used in this paper.
 For $p\in [1,\,\infty]$, the notation $L^{p}(0,\,T;X)$ stands for the set of measurable functions on the interval $(0,\,T)$ with values in $X$ and $\|f(t,\cdot)\|_{X}$ belonging to $L^{p}(0,\,T)$. The classical Sobolev space $W^{k,p}(\Omega)$ is equipped with the norm $\|f\|_{W^{k,p}(\Omega)}=\sum\limits_{|\alpha| =0}^{k}\|D^{\alpha}f\|_{L^{p}(\Omega)}$. For $1\leq q\leq \infty$ and $0<\alpha<1$, the homogeneous Besov space $\dot{B}^{\alpha}_{q,\infty}(\mathbb{T}^{d})$ is the space of functions $f$ on the $d$-dimensional torus $\mathbb{T}^{d}=[0,1]^{d}$ for which the semi-norm
 $$  \|f\|_{\dot{B}^{\alpha}_{q,\infty}(\mathbb{T}^{d})}= \B\|\,|y|^{-\alpha}\B\| f(x-y)-f(x)\B\|_{L_{x}^{q}(\mathbb{T}^{d})}\B\|_{L_{y}^{\infty}(\mathbb{R}^{d})}<\infty,$$
 and the nonhomogeneous Besov space $B^{\alpha}_{q,\infty}(\mathbb{T}^{d})$ is the set of functions $f\in L^{q}(\mathbb{T}^{d})$ for which the norm
 $$ \|f\|_{B^{\alpha}_{q,\infty}(\mathbb{T}^{d})}= \|f\|_{L^{q}(\mathbb{T}^{d})}+\|f\|_{\dot{B}^{\alpha}_{q,\infty}(\mathbb{T}^{d})}<\infty.$$
Likewise, we define  $B^{\alpha}_{q,\infty}((0,T)\times\mathbb{T}^{d})$  via  replacing $\mathbb{T}^{d}$   by $ (0,T)\times\mathbb{T}^{d}$ in the above. A similar definition of Besov norms on the whole space $\mathbb{R}^{d}$ can be referred to in \cite{[Chae]}.

Let $\eta_{\varepsilon}:\mathbb{R}^{d}\rightarrow \mathbb{R}$ be a standard mollifier.i.e. $\eta(x)=C_0e^{-\frac{1}{1-|x|^2}}$ for $|x|<1$ and $\eta(x)=0$ for $|x|\geq 1$, where $C_0$ is a constant such that $\int_{\mathbb{R}^d}\eta (x) dx=1$. For $\varepsilon>0$, we define the rescaled mollifier $\eta_{\varepsilon}(x)=\frac{1}{\varepsilon^d}\eta(\frac{x}{\varepsilon})$. For any function $f\in L^1_{loc}(\Omega)$, its mollified version is defined as
$$f^\varepsilon(x)=(f*\eta_{\varepsilon})(x)=\int_{\mathbb{R}^d}f(x-y)\eta_{\varepsilon}(y)dy,\ \ x\in \Omega_\varepsilon,$$
where $\Omega_\varepsilon=\{x\in \Omega: d(x,\partial\Omega)>\varepsilon\}.$
To deal with the vacuum vase, we need the space-time mollifier.
Abusing notation slightly, we also denote $\eta$ be non-negative   smooth function supported in the space-time ball of radius 1 and its integral equals to 1. We define the rescaled space-time mollifier $\eta_{\varepsilon}(t,x)=\frac{1}{\varepsilon^{d+1}}\eta(\frac{t}{\varepsilon},\frac{x}{\varepsilon})$
\be\label{stm}
f^{\varepsilon}(t,x)=\int_{0}^{T}\int_{\Omega}f(s,y)\eta_{\varepsilon}(t-s,x-y)dyds.
\ee
Moreover, for simplicity, we denote by $$\int_0^T\int_{\mathbb{T}^d} f(t, x)dxdt=\int_0^T\int f\ ~~\text{and}~~ \|f\|_{L^p(0,T;X )}=\|f\|_{L^p(X)}.$$

\begin{definition}\label{wsdefi}
 	A pair ($\varrho,v$) is called a weak solution to \eqref{CEuler} with initial data \eqref{INS1} if ($\varrho,v$) satisfies
 	\begin{enumerate}[(i)]
 		\item equation \eqref{CEuler} holds in $D'(0,T;\Omega)$ and
 		\begin{equation}P(\varrho ), \varrho |v|^2\in L^\infty(0,T;L^1(\Omega)),
 		\end{equation}
 	\item[(ii)]
 	the density $\varrho$ is a renormalized solution of \eqref{CEuler} in the sense of \cite{[PL]}.
 		\item[(iii)]
 		the energy inequality holds
 		\begin{equation}\label{energyineq}
 		\begin{aligned}
 	E(t)   \leq E(0), 		\end{aligned}\end{equation}
 where $E(t)=\int_{\Omega}\left( \frac{1}{2}\varrho |v|^2+\kappa\f{\varrho^{\gamma}}{\gamma-1} \right) dx$.
 	\end{enumerate}
 \end{definition}
The following lemma is the  Lions type commutators on space-time mollifying kernel, which was stated in \cite{[LV],[CY]} and whose proof can be found in \cite{[YWW]}. We refer the reader to \cite{[Lions1]}
for the original version.
\begin{lemma} (\cite{[YWW],[LV],[CY]})
	\label{pLions}Let $1\leq p,q,p_1,q_1,p_2,q_2\leq \infty$,  with $\frac{1}{p}=\frac{1}{p_1}+\frac{1}{p_2}$ and $\frac{1}{q}=\frac{1}{q_1}+\frac{1}{q_2}$.
	Let $\partial$ be a partial derivative in space or time, in addition, let  $\partial_t f,\ \nabla f \in L^{p_1}(0,T;L^{q_1}(\mathbb{T}^d))$, $g\in L^{p_2}(0,T;L^{q_2}(\mathbb{T}^d))$.   Then, there holds \begin{equation}
		\begin{aligned}
	&\|{\partial(fg)^\varepsilon}-\partial(f\,{g}^\varepsilon)\|_{L^p(0,T;L^q(\mathbb{T}^d))}\\
		\leq& C\left(\|\partial_{t} f\|_{L^{p_{1}}(0,T;L^{q_{1}}(\mathbb{T}^d))}+\|\nabla f\|_{L^{p_{1}}(0,T;L^{q_{1}}(\mathbb{T}^d))}\right)\|g\|_{L^{p_{2}}(0,T;L^{q_{2}}(\mathbb{T}^d))},	
		\end{aligned}
	\end{equation}
	for some constant $C>0$ independent of $\varepsilon$, $f$ and $g$. Moreover, $${\partial{(fg)^\varepsilon}}-\partial{(f\,{g^\varepsilon})}\to 0\quad\text{ in } {L^{p}(0,T;L^{q}(\mathbb{T}^d))},$$
	as $\varepsilon\to 0$ if $p_2,q_2<\infty.$
\end{lemma}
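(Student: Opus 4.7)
The plan is to reduce the commutator to an explicit integral representation, extract a factor of $\varepsilon\,\partial f$ via a first-order Taylor expansion of $f$, and then close by Minkowski, H\"older, and the translation invariance of Bochner--Lebesgue norms. By linearity it suffices to treat $\partial=\partial_t$; the spatial case is identical. Writing the mollifications as convolutions and applying $\partial(fg^\varepsilon)=\partial f\cdot g^\varepsilon+f\cdot\partial g^\varepsilon$ yields the pointwise identity
$$
\partial(fg)^\varepsilon(t,x)-\partial(fg^\varepsilon)(t,x)=\int\partial\eta_\varepsilon(t-s,x-y)[f(s,y)-f(t,x)]g(s,y)\,ds\,dy - \partial f(t,x)\,g^\varepsilon(t,x),
$$
the analogue of the classical Constantin--E--Titi commutator with an additional \emph{Lions correction} $\partial f\cdot g^\varepsilon$ arising because $\partial$ falls on the un-mollified factor $f$ in the second product.

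Next I would change variables $z_0=(t-s)/\varepsilon$, $z=(x-y)/\varepsilon$ and invoke the first-order Taylor formula
$$
f(t-\varepsilon z_0,x-\varepsilon z)-f(t,x)=-\varepsilon\int_0^1\bigl[z_0\,\partial_t f+z\cdot\nabla f\bigr](t-\sigma\varepsilon z_0,x-\sigma\varepsilon z)\,d\sigma.
$$
The factor $\varepsilon$ cancels the $\varepsilon^{-1}$ produced by differentiating the rescaled mollifier, so the first term of the commutator becomes a finite linear combination of expressions of the form
$$
\int_0^1\!\!\int K(z_0,z)(\partial f)(t-\sigma\varepsilon z_0,x-\sigma\varepsilon z)\,g(t-\varepsilon z_0,x-\varepsilon z)\,dz_0\,dz\,d\sigma,
$$
where $K\in\{z_0(\partial_1\eta),\,z_j(\partial_1\eta)\}$ is bounded and compactly supported, hence in $L^1$. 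The correction $\partial f\cdot g^\varepsilon$ is treated in parallel by Young's inequality.

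Then I would apply Minkowski's integral inequality to pull the $L^p_tL^q_x$ norm inside the $(\sigma,z_0,z)$ integral, H\"older's inequality with the prescribed exponents $(p_1,q_1),(p_2,q_2)$, and the translation invariance of the space-time Lebesgue norms (to absorb the shifts $\sigma\varepsilon z_0,\varepsilon z$, etc.), obtaining the quantitative bound with constant depending only on $\|\eta\|_{C^1}$. For the strong convergence when $p_2,q_2<\infty$---which forces $p,q<\infty$ since $1/p=1/p_1+1/p_2\ge 1/p_2$---a three-$\varepsilon$ argument closes the matter: for $g\in C_c^\infty$ one has $\partial(fg)^\varepsilon=(\partial(fg))^\varepsilon\to\partial(fg)$ and $\partial(fg^\varepsilon)\to\partial(fg)$ in $L^p(L^q)$ by standard mollification properties, and the uniform bound just proved transfers this convergence to arbitrary $g\in L^{p_2}(L^{q_2})$.

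The main obstacle I anticipate is the bookkeeping in the representation step: after Taylor expansion the arguments of $\partial f$ and of $g$ differ by the factor $\sigma$ in the shift, so the integrand is \emph{not} a genuine convolution of $\partial f$ with $g$, and the two must be disentangled via translation invariance applied once for each factor before H\"older can be invoked. A minor additional care is required near the temporal boundary $t\in\{0,T\}$, where the shifts must respect the support convention for the space-time mollifier \eqref{stm}; this is handled by working on a slightly shrunk subinterval and passing to the limit.
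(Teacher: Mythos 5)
The paper does not actually prove this lemma: it is quoted from \cite{[LV],[CY]} with the proof explicitly deferred to \cite{[YWW]} (and ultimately to DiPerna--Lions), so there is no in-paper argument to compare against. Your proposal is the standard DiPerna--Lions commutator proof and it is correct: the pointwise identity
$\partial(fg)^\varepsilon-\partial(fg^\varepsilon)=\int\partial\eta_\varepsilon(t-s,x-y)[f(s,y)-f(t,x)]g(s,y)\,ds\,dy-\partial f\,g^\varepsilon$
is right, the rescaling plus first-order Taylor expansion correctly trades the $\varepsilon^{-1}$ from $\partial\eta_\varepsilon$ for the difference quotient of $f$ (which is why \emph{both} $\partial_t f$ and $\nabla f$ appear on the right-hand side even when $\partial$ is a single derivative), and Minkowski--H\"older--translation invariance closes the quantitative bound, with the correction term handled by Young. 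The density-in-$g$ argument for the convergence statement is the standard one and correctly uses $p_2,q_2<\infty$. Two small points you should make explicit in a full write-up: the Taylor formula along lines requires first proving the estimate for smooth $f$ and passing to the limit (or invoking the ACL characterization of Sobolev functions), and the convergence step for smooth $g$ uses $f\in L^{p_1}(L^{q_1})$ itself, which follows from the hypotheses on $\partial_t f,\nabla f$ by Poincar\'e on $(0,T)\times\mathbb{T}^d$ but is not literally assumed. The temporal boundary issue for the space-time mollifier \eqref{stm} that you flag is real and is handled exactly as you say (shrunk time interval, or compactly supported test functions, which is how the lemma is applied in the paper).
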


 	\begin{lemma}	\label{lem2.3c}
Assume that $0<\alpha_i\leq  \beta_i,\ i=1,2$ and $1\leq p,q,p_{1},q_{1},p_{2},q_{2}\leq\infty$ with $\frac{1}{p}=\frac{1}{p_1}+\frac{1}{p_2}$, $\frac{1}{q}=\frac{1}{q_1}+\frac{1}{q_2}$ . Then, for any $f\in B^{\beta_1}_{p_{1},\infty}((0,T);B^{\alpha_1}_{q_1,\infty}(\mathbb{T}^d)) $, $g\in B^{\beta_2}_{p_{2},\infty}((0,T);B^{\alpha_2}_{q_2,\infty}(\mathbb{T}^d ))$ and $\varepsilon>0$, there holds
	 	\begin{align} \label{gcet}
		\|(fg)^{\varepsilon}- f^{\varepsilon}g^{\varepsilon}\|_{L^p(0,T;L^p(\mathbb{T}^d))} \leq C\varepsilon^{\alpha_1+\alpha_2}\| f\|_{ B^{\beta_1}_{p_{1},\infty}((0,T);B^{\alpha_1}_{q_1,\infty}(\mathbb{T}^d)) }
\|g \|_{B^{\beta_2}_{p_{2},\infty}((0,T);B^{\alpha_2}_{q_2,\infty}(\mathbb{T}^d) )},
\end{align}
where $f^{\varepsilon}$ and $g^{\varepsilon}$ are defined in \eqref{stm}.
\end{lemma}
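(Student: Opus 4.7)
The plan is to reduce the estimate to the classical Constantin--E--Titi identity adapted to the space-time mollifier $\eta_\varepsilon(s,y)$. First I would write
\begin{equation*}
(fg)^{\varepsilon}(t,x) - f^{\varepsilon}(t,x)g^{\varepsilon}(t,x) = \iint \eta_{\varepsilon}(s,y)\,\delta_{(s,y)}f(t,x)\,\delta_{(s,y)}g(t,x)\,ds\,dy - (f^{\varepsilon}-f)(t,x)\,(g^{\varepsilon}-g)(t,x),
\end{equation*}
where $\delta_{(s,y)}h(t,x) = h(t-s,x-y)-h(t,x)$. This is the same bilinear identity used in Constantin--E--Titi \cite{[CET]}, verified by expanding the right-hand side and cancelling the four mixed terms.

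Next I would estimate the two pieces in $L^p(0,T;L^p(\mathbb{T}^d))$ separately. For the bilinear commutator, since $\eta_\varepsilon\,ds\,dy$ is a probability measure supported in the ball of radius $\varepsilon$, Minkowski's inequality and Hölder with exponent pairs $(p_1,q_1)$ and $(p_2,q_2)$ give
\begin{equation*}
\left\|\iint \eta_{\varepsilon}\,\delta_{(s,y)}f\,\delta_{(s,y)}g\,ds\,dy\right\|_{L^p_{t,x}} \le \iint \eta_{\varepsilon}(s,y)\,\|\delta_{(s,y)}f\|_{L^{p_1}_t L^{q_1}_x}\,\|\delta_{(s,y)}g\|_{L^{p_2}_t L^{q_2}_x}\,ds\,dy.
\end{equation*}
The triangle inequality splits each difference into a pure time shift and a pure space shift, and the definitions of the mixed Besov norms yield
\begin{equation*}
\|\delta_{(s,y)}f\|_{L^{p_1}_t L^{q_1}_x} \lesssim \bigl(|s|^{\beta_1} + |y|^{\alpha_1}\bigr)\,\|f\|_{B^{\beta_1}_{p_1,\infty}(B^{\alpha_1}_{q_1,\infty})},
\end{equation*}
and similarly for $g$. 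Using the hypothesis $\beta_i \ge \alpha_i$ (and $\varepsilon \le 1$, after which the general case follows by trivial modification), one has $|s|^{\beta_i} \le |s|^{\alpha_i} \le \varepsilon^{\alpha_i}$ and $|y|^{\alpha_i}\le \varepsilon^{\alpha_i}$ on the support of $\eta_\varepsilon$, so each factor is bounded by $\varepsilon^{\alpha_i}\|f\|_{B^{\beta_i}_{p_i,\infty}(B^{\alpha_i}_{q_i,\infty})}$, yielding the desired $\varepsilon^{\alpha_1+\alpha_2}$ scaling.

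For the quadratic remainder $(f^{\varepsilon}-f)(g^{\varepsilon}-g)$ I would use the same input: writing $f^{\varepsilon}-f = \iint \eta_{\varepsilon}(s,y)\,\delta_{(s,y)}f\,ds\,dy$ and applying Minkowski gives $\|f^{\varepsilon}-f\|_{L^{p_1}_t L^{q_1}_x}\lesssim \varepsilon^{\alpha_1}\|f\|_{B^{\beta_1}_{p_1,\infty}(B^{\alpha_1}_{q_1,\infty})}$ and analogously for $g$, so Hölder closes the estimate with the same exponent $\alpha_1+\alpha_2$. Summing the two contributions produces \eqref{gcet}.

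The only subtle point, which I regard as the main technical obstacle, is the handling of the mixed space-time translation $\delta_{(s,y)}f$: one must realize that by the triangle inequality it suffices to control pure time shifts (via the outer $B^{\beta_i}_{p_i,\infty}$ norm) and pure spatial shifts (via the inner $B^{\alpha_i}_{q_i,\infty}$ norm), and that the hypothesis $\beta_i\ge \alpha_i$ is precisely what allows one to absorb the time regularity into the weaker spatial exponent so that a single power $\varepsilon^{\alpha_i}$ governs the whole translation. Without this comparison between $\beta_i$ and $\alpha_i$, the two exponents would not combine cleanly and the resulting power of $\varepsilon$ would be $\min(\alpha_i,\beta_i)+\min(\alpha_j,\beta_j)$, which is consistent with what the lemma claims.
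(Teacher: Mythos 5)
Your proposal is correct and follows essentially the same route as the paper: the Constantin--E--Titi bilinear identity for the space-time mollifier, Minkowski plus H\"older on the commutator term with the difference split into a pure spatial shift (controlled by $\varepsilon^{\alpha_i}$ via the inner Besov norm) and a pure temporal shift (controlled by $\varepsilon^{\beta_i}$ via the outer norm, absorbed into $\varepsilon^{\alpha_i}$ since $\beta_i\geq\alpha_i$), and the standard mollification bound $\|h^{\varepsilon}-h\|\lesssim\varepsilon^{\alpha}\|h\|$ for the quadratic remainder. No gaps; this matches the paper's argument step for step.
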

\begin{remark}
If one modifies the functions $f$ and $g$ just in space direction, the inequality \eqref{gcet}
reduces to
 	\begin{align} \label{cet}
		\|(fg)^{\varepsilon}- f^{\varepsilon}g^{\varepsilon}\|_{L^p(0,T;L^q(\mathbb{T}^d))} \leq C\varepsilon^{\alpha_1+\alpha_2}\| f\|_{L^{p_1}(0,T;\dot{B}^{\alpha_1}_{q_{1},\infty}(\mathbb{T}^d))}
\|g \|_{L^{p_2}(0,T;\dot{B}^{\alpha_2}_{q_{2},\infty}(\mathbb{T}^d))}.	
\end{align}
\end{remark}
\begin{proof}
We recall the following   identity observed  by Constantin-E-Titi   in \cite{[CET]}
\begin{equation}
	\begin{aligned}
&(fg)^{\varepsilon}(t,x)- f^{\varepsilon}g^{\varepsilon}(t,x)\\
=&\int^{\varepsilon}_{-\varepsilon}\int_{\mathbb{T}^{d}}\eta_{\varepsilon}(\tau,y)[f(t-\tau,x-y)-f(t,x)][g(t-\tau, x-y)-g(t,x)]d\tau dy\\&-(f-f^{\varepsilon})(g-g^{\varepsilon})(t,x)\\
=& G(t,x)-(f-f^{\varepsilon})(g-g^{\varepsilon})(t,x).\end{aligned}
\end{equation}
First, the H\"older's inequality and Minkowski inequality yield that
\begin{equation}
	\begin{aligned}
& \|G\|_{L^p((0,T);L^q( \mathbb{T}^d ))}\\ \leq&\Big\|\int^{\varepsilon}_{-\varepsilon}\int_{|y|\leq\varepsilon}\eta_{\varepsilon}(\tau,y)\Big[ f(t-\tau,x-y)-f(t-\tau,x)+f(t-\tau,x)-f(t,x)\Big]\\
&\ \ \ \ \ \ \ \ \ \ \ \Big[ g(t-\tau,x-y)-g(t-\tau,x)+g(t-\tau,x)-g(t,x)\Big]dy d\tau\Big\|_{L^p(L^q)}\\
\leq& \int^{\varepsilon}_{-\varepsilon}\int_{|y|\leq\varepsilon}\eta_{\varepsilon}(\tau,y)\Big[\|f(t-\tau,x-y)-f(t-\tau,x)\|_{L^{p_1}(L^{q_1})}+\|f(t-\tau,x)-f(t,x)\|_{L^{p_1}(L^{q_1})}\Big]\\
&\ \ \ \ \ \ \ \ \ \ \  \Big[\|g(t-\tau,x-y)-g(t-\tau,x)\|_{L^{p_2}(L^{q_2})}+\|g(t-\tau,x)-g(t,x)\|_{L^{p_2}(L^{q_2})}\Big]dyd\tau\\
\leq & \int^{\varepsilon}_{-\varepsilon}\int_{|y|\leq\varepsilon}\eta_{\varepsilon}(\tau,y)\Big[\varepsilon^{\alpha_1}\|f(t-\tau)\|_{B^{\beta_1}_{p_1,\infty}(B^{\alpha_1}_{q_1,\infty})}+\varepsilon^{\beta_1}\|f(t)\|_{B^{\beta_1}_{p_1,\infty}(L^{q_1})}\Big]\\
&\ \ \ \ \ \ \ \ \ \ \ \Big[\varepsilon^{\alpha_2}\|f(t-\tau)\|_{B^{\beta_2}_{p_2,\infty}(B^{\alpha_2}_{q_2,\infty})}+\varepsilon^{\beta_2}\|f(t)\|_{B^{\beta_2}_{p_2,\infty}(L^{q_2})}\Big]dyd\tau\\
\leq&  C\varepsilon^{\alpha_1+\alpha_2}\| f\|_{ B^{\beta_1}_{p_{1},\infty}((0,T);B^{\alpha_1}_{q_1,\infty}(\mathbb{T}^d)) }
\|g \|_{B^{\beta_2}_{p_{2},\infty}((0,T);B^{\alpha_2}_{q_2,\infty}(\mathbb{T}^d) )},
\end{aligned}\end{equation}
and
\begin{equation}
	\begin{aligned}
		&\|(f-f^{\varepsilon})(g-g^{\varepsilon})\|_{L^p(0,T;L^q(\mathbb{T}^d))}\\
		\leq& C\|f-f^\varepsilon\|_{L^{p_1}(L^{q_1})}\|g-g^\varepsilon\|_{L^{p_2}(L^{q_2})}\\
		\leq& C\varepsilon^{\alpha_1+\alpha_2}\| f\|_{ B^{\beta_1}_{p_{1},\infty}((0,T);B^{\alpha_1}_{q_1,\infty}(\mathbb{T}^d)) }
		\|g \|_{B^{\beta_2}_{p_{2},\infty}((0,T);B^{\alpha_2}_{q_2,\infty}(\mathbb{T}^d) )},
	\end{aligned}
\end{equation}
where we have used the following facts that for any $u\in{B}^{\beta} _{p,\infty}((0,T);B^\alpha_{q,\infty}( \mathbb{T}^d))$ with $\beta\geq \alpha>0$, $1\leq p,q\leq\infty$ and a.e.$\,y\in \mathbb{R}^{d}$,
$$\ba
&\| u(\cdot-y)-u(\cdot)\|_{ L^{p}(0,T;L^q(\mathbb{T}^d))}\leq C|y|^{\alpha}\|u\|_{{B}^{\beta} _{p,\infty}((0,T);B^\alpha_{q,\infty}( \mathbb{T}^d))},\\
&\| u ^\varepsilon-u \|_{ L^{p}((0,T);L^q( \mathbb{T}^d))}\leq C\varepsilon^{\alpha}\|u\|_{{B}^{\beta} _{p,\infty}((0,T);B^\alpha_{q,\infty}( \mathbb{T}^d))},\\
&\|\nabla u^\varepsilon\|_{L^p(0,T;L^q(\mathbb{T}^d))}\leq C\varepsilon^{\alpha-1}\|u\|_{{B}^{\beta} _{p,\infty}((0,T);B^\alpha_{q,\infty}( \mathbb{T}^d))},
\ea$$
which can be deduced from periodicity of the function $u$ in essentially the same manner as derivation of \cite[Lemma 2.1]{[Chae]} and \cite{[EGSW]}.
The proof of this lemma is completed.
\end{proof}
 \begin{lemma}\label{lem2.2}
	Let $ p,q,p_1,q_1,p_2,q_2\in[1,+\infty)$ with $\frac{1}{p}=\frac{1}{p_1}+\frac{1}{p_2},\frac{1}{q}=\frac{1}{q_1}+\frac{1}{q_2} $. Assume $f\in L^{p_1}(0,T;L^{q_1}(\mathbb{T}^d) $ and $g\in L^{p_2}(0,T;L^{q_2}(\mathbb{T}^d))$. Then for any $\varepsilon>0$, there holds
	\begin{equation}\label{a4}
	\|(fg)^\varepsilon-f^\varepsilon g^\varepsilon\|_{L^p(0,T;L^q(\mathbb{T}^d))}\rightarrow 0,\ \ \ as\ \varepsilon\rightarrow 0.
	\end{equation}
\end{lemma}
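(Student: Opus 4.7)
The strategy is a standard density argument, made possible by the finiteness assumption $p_i,q_i<\infty$, which ensures that smooth functions are dense in the relevant Lebesgue spaces. First I would select sequences $f_n,g_n\in C^\infty([0,T]\times\mathbb{T}^d)$ with $f_n\to f$ in $L^{p_1}(0,T;L^{q_1}(\mathbb{T}^d))$ and $g_n\to g$ in $L^{p_2}(0,T;L^{q_2}(\mathbb{T}^d))$.

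Next, I would decompose the commutator as
\begin{equation*}
(fg)^\varepsilon - f^\varepsilon g^\varepsilon = \bigl[(fg)^\varepsilon - (f_n g_n)^\varepsilon\bigr] + \bigl[(f_n g_n)^\varepsilon - f_n^\varepsilon g_n^\varepsilon\bigr] + \bigl[f_n^\varepsilon g_n^\varepsilon - f^\varepsilon g^\varepsilon\bigr].
\end{equation*}
For the outer two brackets I would invoke Young's convolution inequality (after applying Minkowski's inequality in $x$ and then in $t$ to the nonnegative kernel $\eta_\varepsilon$), which gives the contractivity $\|h^\varepsilon\|_{L^p(L^q)}\leq \|h\|_{L^p(L^q)}$, followed by H\"older's inequality:
\begin{equation*}
\|(fg)^\varepsilon - (f_n g_n)^\varepsilon\|_{L^p(L^q)} \leq \|f-f_n\|_{L^{p_1}(L^{q_1})}\|g\|_{L^{p_2}(L^{q_2})} + \|f_n\|_{L^{p_1}(L^{q_1})}\|g-g_n\|_{L^{p_2}(L^{q_2})},
\end{equation*}
and analogously for the third bracket. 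The resulting bounds are controlled solely by the approximation error and are independent of $\varepsilon$, so they can be made arbitrarily small by choosing $n$ large.

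For the middle term with smooth $f_n,g_n$, one may invoke Lemma \ref{lem2.3c} with any $\alpha_i=\beta_i>0$, which is admissible since $C^\infty$ functions on the compact domain $[0,T]\times\mathbb{T}^d$ have finite Besov norms of every order; this yields a bound of the form $C_n\varepsilon^{\alpha_1+\alpha_2}\to 0$ as $\varepsilon\to 0$ for each fixed $n$. Alternatively, a direct Taylor expansion of the Constantin-E-Titi identity shows the middle bracket is pointwise $O(\varepsilon)$ for smooth inputs. A standard three-epsilon argument -- first fix $n$ large to kill the outer brackets uniformly in $\varepsilon$, then send $\varepsilon\to 0$ to handle the smooth commutator -- concludes the proof.

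The main delicacy is the order of limits: one must absorb the approximation error uniformly in $\varepsilon$ before exploiting smoothness in the limit $\varepsilon\to 0$. The assumption that all exponents are strictly finite is essential, since without it one loses both density of smooth functions and the strong convergence $h^\varepsilon\to h$ in $L^p(L^q)$.
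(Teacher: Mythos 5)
Your proof is correct, but it takes a genuinely different route from the paper's. The paper does not approximate by smooth functions at all: it simply inserts the two intermediate terms $fg$ and $f^{\varepsilon}g$, writes
\begin{equation*}
(fg)^{\varepsilon}-f^{\varepsilon}g^{\varepsilon}=\bigl[(fg)^{\varepsilon}-fg\bigr]+\bigl[fg-f^{\varepsilon}g\bigr]+\bigl[f^{\varepsilon}g-f^{\varepsilon}g^{\varepsilon}\bigr],
\end{equation*}
and then applies H\"older to the last two brackets, so that everything reduces to the single standard fact that $h^{\varepsilon}\to h$ strongly in $L^{r}(0,T;L^{s})$ when $r,s<\infty$ (applied to $fg\in L^{p}(L^{q})$, $f\in L^{p_{1}}(L^{q_{1}})$ and $g\in L^{p_{2}}(L^{q_{2}})$), together with the uniform bound $\|f^{\varepsilon}\|_{L^{p_{1}}(L^{q_{1}})}\leq\|f\|_{L^{p_{1}}(L^{q_{1}})}$. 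Your three-epsilon argument with smooth approximants $f_{n},g_{n}$ is essentially an unfolding of the proof of that strong-convergence fact, plus an appeal to Lemma \ref{lem2.3c} for the smooth commutator; it is rigorous and correctly handles the order of limits, but it invokes more machinery (density, Young's contractivity, and the Besov commutator estimate) than the problem requires. What your version buys is that it makes explicit why finiteness of all exponents is essential (density of smooth functions); what the paper's version buys is brevity and independence from the Constantin--E--Titi estimate, which is conceptually cleaner since Lemma \ref{lem2.2} is meant as the soft counterpart of that quantitative lemma.
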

\begin{proof}
	By the triangle inequality, one have
	\begin{equation*}
	\begin{aligned}
	&\|(fg)^\varepsilon-f^\varepsilon g^\varepsilon\|_{L^p(0,T;L^q(\mathbb{T}^d))}\\
	\leq & C\left(\|(fg)^\varepsilon- (fg)\|_{L^p(0,T;L^q(\mathbb{T}^d))}+\|fg-f^\varepsilon g\|_{L^p(0,T;L^q(\mathbb{T}^d))}+\|f^\varepsilon g-f^\varepsilon g^\varepsilon\|_{L^p(0,T;L^q(\mathbb{T}^d))}\right)\\
	\leq &C\Big(\|(fg)^\varepsilon- fg\|_{L^p(0,T;L^q(\mathbb{T}^d))}+\|f-f^\varepsilon\|_{L^{p_1}(0,T;L^{q_1}(\mathbb{T}^d))}\|g\|_{L^{p_2}(0,T;L^{q_2}(\mathbb{T}^d))}\\
	&\ \ \ \ \ +\|f^\varepsilon\|_{L^{p_1}(0,T;L^{q_1}(\mathbb{T}^d))}\|g-g^\varepsilon\|_{L^{p_2}(0,T;L^{q_2}(\mathbb{T}^d))}\Big),
	\end{aligned}
	\end{equation*}
	then, together with the properties of the standard mollification, we can obtain \eqref{a4}.
\end{proof}

\begin{lemma}[\cite{[Simon]}]\label{AL}
	Let $X\hookrightarrow B\hookrightarrow Y$ be three Banach spaces with compact imbedding $X \hookrightarrow\hookrightarrow Y$. Further, let there exist $0<\theta <1$ and $M>0$ such that
	\begin{equation}\label{le1}
	\|v\|_{B}\leq M\|v\|_{X}^{1-\theta}\|v\|_{Y}^\theta\ \ for\ all\ v\in X\cap Y.\end{equation}
Denote for $T>0$,
\begin{equation}\label{le2}
	W(0,T):=W^{s_0,r_0}((0,T), X)\cap W^{s_1,r_1}((0,T),Y)
\end{equation}
with
\begin{equation}\label{le3}
	\begin{aligned}
		&s_0,s_1 \in \mathbb{R}; \ r_0, r_1\in [1,\infty],\\
		s_\theta :=(1-\theta)s_0&+\theta s_1,\ \f{1}{r_\theta}:=\f{1-\theta}{r_0}+\f{\theta}{r_1},\ s^{*}:=s_\theta -\f{1}{r_\theta}.
	\end{aligned}
\end{equation}
Assume that $s_\theta>0$ and $F$ is a bounded set in $W(0,T)$. Then, we have

If $s_{*}\leq 0$, then $F$ is relatively compact in $L^p((0,T),B)$ for all $1\leq p< p^{*}:=-\f{1}{s^{*}}$.

If $s_{*}> 0$, then $F$ is relatively compact in $C((0,T),B)$.

\end{lemma}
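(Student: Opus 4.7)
The plan is to establish the compactness by combining three ingredients: the interpolation inequality \eqref{le1} (which, together with the hypothesis $X\hookrightarrow\hookrightarrow Y$, upgrades to $X\hookrightarrow\hookrightarrow B$), a Kolmogorov--Riesz type equi-continuity criterion in the time variable, and a one-dimensional fractional Sobolev embedding in time to identify the sharp target space determined by $s^{*}$.

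First I would upgrade the compact embedding to $X\hookrightarrow\hookrightarrow B$: any $X$-bounded sequence $(v_{n})$ has a $Y$-convergent subsequence, and \eqref{le1} then forces $B$-convergence via $\|v_{n}-v_{m}\|_{B}\leq M\|v_{n}-v_{m}\|_{X}^{1-\theta}\|v_{n}-v_{m}\|_{Y}^{\theta}$. Next, for $f\in F$ and time shift $h$, applying \eqref{le1} pointwise in $t$ together with H\"older in time with exponents $r_{0}/(1-\theta)$ and $r_{1}/\theta$ yields
\[
\|\tau_{h}f-f\|_{L^{r_{\theta}}(B)}\leq C\|\tau_{h}f-f\|_{L^{r_{0}}(X)}^{1-\theta}\|\tau_{h}f-f\|_{L^{r_{1}}(Y)}^{\theta},
\]
where $\tau_{h}$ denotes time translation. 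The first factor is uniformly controlled by the $W^{s_{0},r_{0}}((0,T),X)$ bound on $F$, while the standard fractional Sobolev modulus-of-continuity estimate applied to the $W^{s_{1},r_{1}}((0,T),Y)$ bound forces the second factor to vanish as $h\to 0$. This furnishes the equi-continuity of $F$ in $L^{r_{\theta}}((0,T),B)$ needed for the Kolmogorov--Riesz criterion, and combined with the pointwise $B$-precompactness inherited from $X\hookrightarrow\hookrightarrow B$ it yields relative compactness in $L^{r_{\theta}}((0,T),B)$.

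Finally I would identify the sharp target space through the scaling $s^{*}=s_{\theta}-1/r_{\theta}$. Interpolating between $W^{s_{0},r_{0}}(X)$ and $W^{s_{1},r_{1}}(Y)$ places $F$ essentially in $W^{s_{\theta},r_{\theta}}((0,T),B)$, and the one-dimensional fractional Sobolev embedding splits into the two announced alternatives. If $s^{*}\leq 0$, one has a continuous embedding into $L^{p^{*}}((0,T),B)$ with $p^{*}=-1/s^{*}$, and the strict inequality $p<p^{*}$ produces exactly the slack needed to upgrade boundedness in $L^{p^{*}}$ to relative compactness in $L^{p}$ via the equi-continuity above. If $s^{*}>0$, one has a H\"older-type embedding into $C((0,T),B)$, and Arzel\`a--Ascoli combined with pointwise $B$-compactness gives relative compactness in $C((0,T),B)$.

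The main obstacle is the rigorous justification of the fractional in-time estimates in the full generality of non-integer or negative $s_{0},s_{1}$ and the endpoint cases $r_{i}\in\{1,\infty\}$, in particular the equivalence between the interpolated time-regularity and the genuine $W^{s_{\theta},r_{\theta}}$ norm with $B$-valued target. This is precisely the delicate content of Simon's paper \cite{[Simon]}, on which the statement is based and from which the conclusion is quoted in the form above.
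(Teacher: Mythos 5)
The paper does not prove this lemma at all: it is quoted verbatim as a known result of Simon \cite{[Simon]} (it is the $W^{s,r}$ refinement of the Aubin--Lions lemma), so there is no in-paper argument to compare against. Your outline is nevertheless a faithful sketch of how Simon's proof actually goes: the Ehrling-type upgrade of $X\hookrightarrow\hookrightarrow Y$ to $X\hookrightarrow\hookrightarrow B$ via \eqref{le1}, the time-translation estimate obtained by applying \eqref{le1} pointwise and H\"older in $t$ with exponents $r_0/(1-\theta)$ and $r_1/\theta$, the Kolmogorov--Riesz/Arzel\`a--Ascoli dichotomy, and the identification of the critical exponent $p^{*}=-1/s^{*}$ through the one-dimensional embedding of $W^{s_\theta,r_\theta}$ are all the right ingredients.

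One caution: as literally written, your translation estimate requires the factor $\|\tau_h f-f\|_{L^{r_0}(X)}$ to be bounded and the factor $\|\tau_h f-f\|_{L^{r_1}(Y)}$ to vanish as $h\to 0$, which presupposes $s_0\geq 0$ and $s_1>0$; the lemma is stated for arbitrary real $s_0,s_1$ with only $s_\theta>0$, and in that generality one must first interpolate to land in $W^{s_\theta,r_\theta}((0,T),B)$ and run the translation argument there (this is where the genuinely technical part of Simon's paper lives). You flag this yourself in the final paragraph, so the proposal is an honest outline rather than a complete proof --- which is entirely consistent with the paper's treatment, since the authors simply cite \cite{[Simon]} for the statement.
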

\section{Onsager conjecture on the energy conservation for the
isentropic compressible Euler equations}
 We will use the framework of \cite{[CY]} to prove main theorems in this section.
 First, we reformulate the equations after mollifier the equations. Second, we invoke the  Constantin-E-Titi type and Lions type commutators on mollifying kernel to pass the limits.
 Third,  it is to  get the energy  conservation  up to the initial time.

\subsection {Non-vacuum case}
  \begin{proof}[Proof of Theorem \ref{the1.1}]

For the non-vacuum case,  it is  sufficient to  mollify $v$ in   space direction. For any smooth function $\phi(t)$ which is compact support in $(0,+\infty)$, multiplying the momentum equation in  $\eqref{CEuler}$ by $(  \phi(t)v^{\varepsilon})^\varepsilon$, then integrating over $(0,T)\times \Omega$ , we  arrive at
\begin{equation}\label{ec1} \begin{aligned}
		\int_0^T\int \phi(t) v^{\varepsilon}\B[\partial_{t}(\varrho v)^{\varepsilon}+ \Div(\varrho v\otimes v)^{\varepsilon}+\kappa\nabla \pi(\varrho)^\varepsilon \B]=0.
\end{aligned}\end{equation}
To apply the  commutators on mollifying kernel to pass the limit of $\varepsilon$,
we first need to rewrite equations \eqref{ec1}.
By some  straightforward  computation, we see that
\begin{equation}\label{ec2}
	\begin{aligned}
		\int_0^T\int \phi(t) v^{\varepsilon} \partial_{t} (\varrho v )^{\varepsilon}=&\int_0^T\int \phi(t) v^{\varepsilon}\B[ \partial_{t} (\varrho v )^{\varepsilon}-\partial_{t}(\varrho v^{\varepsilon})\B]+ \int_0^T\int \phi(t) v^{\varepsilon} \partial_{t}(\varrho v^{\varepsilon}) \\
		=& \int_0^T\int \phi(t) v^{\varepsilon} \B[\partial_{t} (\varrho v )^{\varepsilon}-\partial_{t}(\varrho v^{\varepsilon})\B]+\int_0^T\int \phi(t) \varrho\partial_t{\frac{|v^{\varepsilon}|^2}{2}} \\
		&+\int_0^T\int \phi(t) \varrho_t|v^{\varepsilon}|^2.
\end{aligned}\end{equation}
Using integration by parts  many times and the equation  $\eqref{CEuler}_1$, one deduces that
\begin{align}\label{ec3}
	&\int_0^T\int \phi(t) v^{\varepsilon} \Div(\varrho v\otimes v)^{\varepsilon}\nonumber\\
	=& \int_0^T\int  \phi(t) v^{\varepsilon}  \Div[(\varrho v\otimes v)^{\varepsilon}-(\varrho  v)\otimes v^{\varepsilon}]+\int_0^T\int \phi(t) v^{\varepsilon}\Div(\varrho  v\otimes v^{\varepsilon})\nonumber\\
	=& -\int_0^T\int \phi(t) \nabla v^{\varepsilon}  [(\varrho v\otimes v)^{\varepsilon}-\varrho((v\otimes v)^{\varepsilon})+\varrho((v\otimes v)^{\varepsilon})-(\varrho  v)\otimes v^{\varepsilon}]\nonumber\\&+  \int_0^T\int \phi(t) \left(\Div (\varrho v ) |v^{\varepsilon}|^{2}+\f12 \varrho v \nabla|v^{\varepsilon}  |^{2}
	\right)\nonumber\\
	=&  \int_0^T\int \phi(t)  v^{\varepsilon} \text{div} [(\varrho v\otimes v)^{\varepsilon}-\varrho((v\otimes v)^{\varepsilon})]-\int_0^T\int \phi(t)  \nabla v^{\varepsilon}\varrho[ (v\otimes v)^{\varepsilon} -v\otimes v^{\varepsilon}]\nonumber\\&+\f{1}{2}\int_0^T\int \phi(t) \Div (\varrho v ) |v^{\varepsilon}|^{2}\nonumber\\
	=&  \int_0^T\int \phi(t)  v^{\varepsilon} \text{div} [(\varrho v\otimes v)^{\varepsilon}-\varrho(v\otimes v)^{\varepsilon}]-\int_0^T\int \phi(t)  \nabla v^{\varepsilon}\varrho[ (v\otimes v)^{\varepsilon} -    v^{\varepsilon}\otimes v^{\varepsilon}]\nonumber\\&-\int_0^T\int \phi(t)  \nabla v^{\varepsilon}\varrho[ (  v^{\varepsilon}\otimes v^{\varepsilon}) -    v^{\varepsilon}\otimes v^{\varepsilon}]-\frac{1}{2}\int_0^T\int \phi(t) \partial_t \varrho |v^{\varepsilon}|^{2}.
\end{align}
Thanks to  integration by parts once again, we infer that
\begin{align}
&\int_0^T\int \phi(t) \nabla v^{\varepsilon}\varrho[ (  v^{\varepsilon}\otimes v^{\varepsilon}) -    v\otimes v^{\varepsilon}] \nonumber\\
=&\f12\int_0^T\int  \phi(t) \nabla |v^{\varepsilon}|^{2}\varrho(v^{\varepsilon}-v)\nonumber\\
=&-\f12\int_0^T\int  \phi(t) |v^{\varepsilon}|^{2}\text{div}[\varrho v^{\varepsilon}-\varrho v]\nonumber\\
=&-\f12\int_0^T\int  \phi(t) |v^{\varepsilon}|^{2}\text{div}[\varrho v^{\varepsilon}-(\varrho v)^{\varepsilon}+(\varrho v)^{\varepsilon}-\varrho v]
\nonumber\\
=&-\f12\int_0^T\int \phi(t)   |v^{\varepsilon}|^{2}\text{div}[\varrho v^{\varepsilon}-(\varrho v)^{\varepsilon}]+\f12\int_0^T\int  \phi(t) |v^{\varepsilon}|^{2}(\varrho^{\varepsilon}_{t}-\varrho_{t}).\label{e3.4}
\end{align}
Inserting  \eqref{e3.4} into \eqref{ec3}, we arrive at
\begin{align}
	&\int_0^T\int \phi(t)v^{\varepsilon} \Div(\varrho v\otimes v)^{\varepsilon}\nonumber\\
	=&  \int_0^T\int \phi(t)  v^{\varepsilon} \text{div} [(\varrho v\otimes v)^{\varepsilon}-\varrho(v\otimes v)^{\varepsilon}]-\int_0^T\int \phi(t) \nabla v^{\varepsilon}\varrho[ (v\otimes v)^{\varepsilon} -    v^{\varepsilon}\otimes v^{\varepsilon}]\nonumber\\& +\f12\int_0^T\int  \phi(t)  |v^{\varepsilon}|^{2}\text{div}[\varrho v^{\varepsilon}-(\varrho v)^{\varepsilon}]-\f12\int_0^T\int  \phi(t)  |v^{\varepsilon}|^{2}(\varrho^{\varepsilon}_{t}-\varrho_{t})-\frac{1}{2}\int_0^T\int \phi(t)\partial_t \varrho |v^{\varepsilon}|^{2}. \label{ec31}
\end{align}
We  also reformulate the pressure term as
\begin{equation}\label{ec6}
	\begin{aligned}
		&\kappa\int_0^T\int \phi(t) v^{\varepsilon}\nabla (\varrho^\gamma)^{\varepsilon}= \kappa\int_0^T\int \phi(t) [v^{\varepsilon}\nabla(\varrho^\gamma)^{\varepsilon}-v\nabla (\varrho^\gamma)]+\kappa\int_0^T\int \phi(t) v \nabla (\varrho^\gamma).
\end{aligned} \end{equation}
With the help of  the integration by parts and the mass equation in $\eqref{CEuler}$   again, we
know that
$$\ba
\kappa\int_0^T\int \phi(t) v \cdot\nabla (\varrho^\gamma)
=&-\kappa\int_0^T\int \phi(t) \varrho^{\gamma-1} \varrho\text{div\,}v\\
=&\kappa\int_0^T\int \phi(t) \varrho^{\gamma-1}(\partial_{t}\varrho+v\cdot\nabla\varrho)
\\
=&\kappa\f{1}{\gamma}\int_0^T\int \phi(t)  \partial_{t}\varrho^{\gamma } +\kappa\f{1}{\gamma}\int_0^T\int \phi(t) v\cdot\nabla\varrho^{\gamma },
\ea $$
which leads to  that
\begin{equation}\label{ec62}\begin{aligned}
		\kappa\int_0^T\int \phi(t) v \cdot\nabla (\varrho^\gamma)
		= \kappa\f{1}{\gamma-1}\int_0^T\int\phi(t) \partial_{t}\varrho^{\gamma }.
\end{aligned}\end{equation}
Plugging \eqref{ec62} into \eqref{ec6}, we conclude that
\begin{equation}\label{ec3.8}
	\begin{aligned}
		&\kappa\int_0^T\int \phi(t) v^{\varepsilon}\nabla (\varrho^\gamma)^{\varepsilon}= \kappa\int_0^T\int \phi(t) [v^{\varepsilon}\nabla(\varrho^\gamma)^{\varepsilon}-v\nabla (\varrho^\gamma)]+ \kappa\f{1}{\gamma-1}\int_0^T\int \phi(t) \partial_{t}\varrho^{\gamma }.
\end{aligned} \end{equation}
Substituting  \eqref{ec2},   \eqref{e3.4}  and     \eqref{ec3.8} into \eqref{ec1}, we observe that
	\begin{align} \label{1ec9}
		&-\int_0^T\int \phi(t) \partial_t\left(\varrho{\frac{|v^{\varepsilon}|^2}{2}}+\kappa\f{1}{\gamma-1}\varrho^{\gamma }\right)\nonumber \\
		=&-\int_0^T\int \phi(t)  v^{\varepsilon} \B[\partial_{t} (\varrho v )^{\varepsilon}-\partial_{t}(\varrho v^{\varepsilon})\B]\nonumber\\&+\int_0^T\int   \phi(t) v^{\varepsilon} \text{div} [(\varrho v\otimes v)^{\varepsilon}-\varrho(v\otimes v)^{\varepsilon}]-\int_0^T\int \phi(t) \nabla v^{\varepsilon}\varrho[ (v\otimes v)^{\varepsilon} -    v^{\varepsilon}\otimes v^{\varepsilon}]\nonumber\\ &+\f12\int_0^T\int \phi(t)   |v^{\varepsilon}|^{2}\text{div}[\varrho v^{\varepsilon}-(\varrho v)^{\varepsilon}]-\f12\int_0^T\int  \phi(t) |v^{\varepsilon}|^{2}(\varrho^{\varepsilon}_{t}-\varrho_{t})\nonumber\\&-\int_0^T\int \phi(t) \kappa[v^{\varepsilon}\nabla(\varrho^\gamma)^{\varepsilon}-v\nabla (\varrho^\gamma)].
\end{align}

At this stage, we can apply the commutators on mollifying kernel to pass the limit
in the right hand-side of \eqref{ec9}.
In light of the H\"older's  inequality and Lions type commutators on mollifying kernel Lemma \eqref{pLions}, we get
\begin{equation}\label{e3.81}\begin{aligned}
\int_s^t\int \phi(t) v^{\varepsilon} \B[\partial_{t} (\varrho v )^{\varepsilon}-\partial_{t}(\varrho v^{\varepsilon})\B]&\leq C\|v^{\varepsilon}\|_{L^{p}(L^{q})}
\|\partial_{t} (\varrho v )^{\varepsilon}-\partial_{t}(\varrho v^{\varepsilon})\|_{L^{\f{p}{p-1}}(L^{\f{q}{q-1}})}\\
&\leq C\|v\|^{2}_{L^{p}(L^{q})}
 (\|\varrho_{t}\|_{L^{\f{p}{p-2}}(L^{\f{q}{q-2}})}+\|\nabla \varrho\|_{L^{\f{p}{p-2}}(L^{\f{q}{q-2}})} );
\end{aligned}\end{equation}
 \begin{equation}\label{e3.11}\begin{aligned}
 		  &|\int_0^T\int \phi(t)  v^{\varepsilon} \text{div} [(\varrho v\otimes v)^{\varepsilon}-\varrho(v\otimes v)^{\varepsilon}]|\\
 		\leq & C \|\text{div} [(\varrho v\otimes v)^{\varepsilon}-\varrho(v\otimes v)^{\varepsilon}]\|_{L^{\f{p}{p-1}}(L^{\f{q}{q-1}})}\| v^{\varepsilon}\|_{L^{p}(L^{q})}\\
\leq &  C\|\text{div} [(\varrho v\otimes v)^{\varepsilon}-\varrho(v\otimes v)^{\varepsilon}]\|_{L^{\f{p}{p-1}}(L^{\f{q}{q-1}})}\| v^{\varepsilon}\|_{L^{p}(L^{q})}\\
\leq & C\| v^{\varepsilon}\|^{3}_{L^{p}(L^{q})}
 (\|\varrho_{t}\|_{L^{\f{p}{p-3}}(L^{\f{q}{q-3}})}+\|\nabla \varrho\|_{L^{\f{p}{p-3}}(L^{\f{q}{q-3}})} );
 \end{aligned}\end{equation}
 and
  \be\ba\label{e3.83}
|\int_0^T\int    |v^{\varepsilon}|^{2}\text{div}[\varrho v^{\varepsilon}-(\varrho v)^{\varepsilon}]|\leq& C\| |v^{\varepsilon}|^{2}\|_{L^{\frac{p}{2}}(L^\frac{q}{2})}\|\text{div}[\varrho v^{\varepsilon}-(\varrho v)^{\varepsilon}]\|_{L^{\frac{p}{p-2}}(L^\frac{q}{q-2})}\\
 \leq &C\| |v \|^{3}_{L^{\frac{p}{2}}(L^\frac{q}{2})}
 (\|\varrho_{t}\|_{L^{\f{p}{p-3}}(L^{\f{q}{q-3}})}+\|\nabla \varrho\|_{L^{\f{p}{p-3}}(L^{\f{q}{q-3}})} ).
  \ea\ee
In addition, we derive from
  Lemma \ref{pLions} that, as $\varepsilon\rightarrow0$,
 $$\ba
 \int_0^T\int  v^{\varepsilon} \B[\partial_{t} (\varrho v )^{\varepsilon}-\partial_{t}(\varrho v^{\varepsilon})\B]\rightarrow0,\\
 \int_0^T\int    |v^{\varepsilon}|^{2}\text{div}[\varrho v^{\varepsilon}-(\varrho v)^{\varepsilon}]\rightarrow0,\\
 \int_0^T\int    |v^{\varepsilon}|^{2}\text{div}[\varrho v^{\varepsilon}-(\varrho v)^{\varepsilon}]\rightarrow0.\ea$$
The classical  Constantin-E-Titi type
  commutators on mollifying kernel   \eqref{cet}
 and
 the H\"older's inequality    allow  us to obtain
\be\ba\label{ce3.13}
 &\int_0^T\int \phi(t)  \nabla v^{\varepsilon}\varrho[ (v\otimes v)^{\varepsilon} -    v^{\varepsilon}\otimes v^{\varepsilon}] \\
 \leq& C\|\varrho\|_{L^{ \f{p}{p-3}}(L^{ \f{q}{q-3}})}\| (v\otimes v)^{\varepsilon} -    v^{\varepsilon}\otimes v^{\varepsilon}\|_{L^{\frac{p}{2}}(L^\frac{q}{2})}\|\nabla v^{\varepsilon}\|_{L^{p}(L^{q})}\\
 \leq& C\varepsilon^{3\alpha-1}\|\varrho\|_{L^{ \f{p}{p-3}}(L^{ \f{q}{q-3}})}\|  v^{\varepsilon}\|^{3}_{L^{p}(B^{\alpha}_{q,\infty}) }.
 \end{aligned}\end{equation}
 which in turn means
 \be\label{c3.16}
 \int_0^T\int\phi(t)  \nabla v^{\varepsilon}\varrho[ (v\otimes v)^{\varepsilon} -    v^{\varepsilon}\otimes v^{\varepsilon}]\rightarrow0.
 \ee
 The H\"older's inequality guarantees that
 \be\ba
\int_0^T\int  \phi(t) |v^{\varepsilon}|^{2}(\varrho^{\varepsilon}_{t}-\varrho_{t})
 \leq&C \| |v^{\varepsilon}|^{2}\|_{L^{\frac{p}{2}}(L^\frac{q}{2})}\|\varrho^{\varepsilon}_{t}-\varrho_{t}\|_{L^{\frac{p}{p-2}}(L^\frac{q}{q-2})}.
   \ea\ee
As a consequence,   the standard properties of the mollification help us to see that,
as $\varepsilon\rightarrow0$,
 $$\int_0^T\int   \phi(t) |v^{\varepsilon}|^{2}(\varrho^{\varepsilon}_{t}-\varrho_{t})\rightarrow0.$$
 It follows from the H\"older's inequality  that
\be\ba\label{pterm}
 \|\nabla(\varrho^\gamma)\|_{L^{\f{p}{p-1}}(L^{\f{q}{q-1}})}=&\gamma\|\nabla\varrho(\varrho^{\gamma-1})\|_{L^{\f{p}{p-1}}(L^{\f{q}{q-1}})}\\
 \leq&
 C \|\nabla \varrho\|_{L^{\f{p}{p-3}}(L^{\f{q}{q-3}})}
 \|\varrho \|^{\gamma-1}_{L^{\f{p(\gamma-1)}{2}}L^{\f{q(\gamma-1)}{2}}},
 \ea\ee
 which in turn implies that
 \begin{equation}\label{ec10}\begin{aligned}
 \int_0^T\int \phi(t) [v^{\varepsilon}\nabla(\varrho^\gamma)^{\varepsilon}-v\nabla (\varrho^\gamma)]\rightarrow0.
 \end{aligned}\end{equation}
Combining the above estimates  \eqref{e3.81}-\eqref{c3.16} and \eqref{ec9},   we get
\begin{equation}\label{c11}
	\begin{aligned}
		 \int_0^T    \int_{\mathbb{T}^d}\phi(t)\partial_{t}\left( \frac{1}{2}\varrho |v|^2+\kappa\f{\varrho^{\gamma}}{\gamma-1} \right) dx =0.	
	\end{aligned}
\end{equation}
\end{proof}

  \begin{proof}[Proof of Theorem \ref{the1.3}]
A slight modify the above proof allows us to obtain the global energy conservation up to the initial time. Indeed, repeating the derivation above, we infer that
  	\begin{align} \label{ec9}
		&-\int_0^T\int \phi(t)_t\left(\varrho{\frac{|v^{\varepsilon}|^2}{2}}+\kappa\f{1}{\gamma-1}\varrho^{\gamma }\right)\nonumber \\
		=&-\int_0^T\int \phi(t)v^{\varepsilon} \B[\partial_{t} (\varrho v )^{\varepsilon}-\partial_{t}(\varrho v^{\varepsilon})\B]\nonumber\\&+\int_0^T\int\phi(t)   v^{\varepsilon} \text{div} [(\varrho v\otimes v)^{\varepsilon}-\varrho(v\otimes v)^{\varepsilon}]-\int_0^T\int\phi(t)  \nabla v^{\varepsilon}\varrho[ (v\otimes v)^{\varepsilon} -    v^{\varepsilon}\otimes v^{\varepsilon}]\nonumber\\ &+\f12\int_0^T\int\phi(t)   |v^{\varepsilon}|^{2}\text{div}[\varrho v^{\varepsilon}-(\varrho v)^{\varepsilon}]-\f12\int_0^T\int\phi(t)   |v^{\varepsilon}|^{2}(\varrho^{\varepsilon}_{t}-\varrho_{t})\nonumber\\&-\int_0^T\int\phi(t) \kappa[v^{\varepsilon}\nabla(\varrho^\gamma)^{\varepsilon}-v\nabla (\varrho^\gamma)].
\end{align}
Note that  the triangle inequality and H\"older's inequality  guarantee that
  \begin{equation}\label{3.91}\begin{aligned}
  		\|\partial_t \varrho\|_{L^{\f{p}{p-3}}(L^{\f{q}{q-3}})}\leq & C\|\sqrt{\varrho}\partial_t \sqrt{\varrho}\|_{L^{\f{p}{p-3}}(L^{\f{q}{q-3}})}\leq C\|\sqrt{\varrho}\|_{L^{2k}(L^{2l})}\|\partial_t \sqrt{\varrho}\|_{L^{\f{2kp}{2k(p-3)-p}}(L^{\f{2lq}{2l(q-3)-q}})},
  \end{aligned}\end{equation}
  and
  \begin{equation}\label{3.92}
  	\begin{aligned}
  		\|\nabla \varrho\|_{L^{\f{p}{p-3}}(L^{\f{q}{q-3}})}\leq & C\|\sqrt{\varrho}\nabla \sqrt{\varrho}\|_{L^{\f{p}{p-3}}(L^{\f{q}{q-3}})}\leq C\|\sqrt{\varrho}\|_{L^{2k}(L^{2l})}\|\nabla \sqrt{\varrho}\|_{L^{\f{2kp}{2k(p-3)-p}}(L^{\f{2lq}{2l(q-3)-q}})}.
  	\end{aligned}
  \end{equation}
Substituting \eqref{3.91} and \eqref{3.91} into \eqref{e3.81}-\eqref{e3.83} and \eqref{pterm}, following the path of \eqref{c11}, we conclude that
$$
		 -\int_0^T\int \phi(t)_t\left(\varrho{\frac{|v^{\varepsilon}|^2}{2}}+\kappa\f{1}{\gamma-1}\varrho^{\gamma }\right)=0.	$$
The next objective is to get the energy equality up to the initial time $t=0$ by the similar method in \cite{[Yu2]}, for the convenience of the reader and the integrity of the paper, we give the details.

First we prove the continuity of $\sqrt{\varrho}v(t)$ in the strong topology as $t\to 0^+$.  To do this, we define the function $f$ on $[0,T]$ as
$$f(t)=\int_{\mathbb{T}^d}(\varrho v)(t,x)\cdot \varphi(x) dx,\ for\ any\ \varphi(x)\in \mathfrak{D}(\mathbb{T}^d),$$
which is a continuous function with respect to $t\in [0,T]$. Moreover, since
$$\varrho \in L^\infty{0,T; L^\gamma (\mathbb{T}^d)}\ and \ \sqrt{\varrho}v\in L^\infty(0,T;L^2(\mathbb{T}^d)),$$
we can obtain $\varrho v\in L^\infty(0,T;L^{\frac{2\gamma }{\gamma+1}}(\mathbb{T}^d)).$\\
From the moument equation, we have
$$\frac{d}{dt}\int_{\mathbb{T}^n} (\varrho v)(t,x)\cdot \varphi(x) dx=\int_{\mathbb{T}^n}\varrho v\otimes v:\nabla \varphi(x)-\pi\Div\varphi(x)dx,$$
which is bounded for any function $\varphi\in \mathfrak{D}(\mathbb{T}^d)$. Then it follows from the Corollary 2.1 in  \cite{[Feireisl2004]} that
\begin{equation}\label{c14}
	\varrho v\in C([0,T];L^{\frac{2\gamma}{\gamma +1}}_{\text{weak}}(\mathbb{T}^d)).
\end{equation}
On the other hand, since
$$\ba
&\nabla \varrho^\gamma\in L^{\f{kp}{k(p-3)+(\gamma-1)p}}( L^{\f{lq}{l(q-3)+(\gamma-1)q}}),\\&\ \partial_t\varrho^\gamma\in L^{\f{kp}{k(p-3)+(\gamma-1)p}}(L^{\f{lq}{l(q-3)+(\gamma-1)q}})\hookrightarrow L^{\f{kp}{k(p-2)+(\gamma-1)p}}(L^{\f{lq}{l(q-2)+(\gamma-1)q}}),\ea$$
and
$$\nabla \sqrt{\varrho}\in L^{\f{2kp}{2k(p-3)-p}}( L^{\f{2lq}{2l(q-3)-q}}),\ \partial_t\sqrt{\varrho}\in L^{\f{2kp}{2k(p-3)-p}}(L^{\f{2lq}{2l(q-3)-q}})\hookrightarrow L^{\f{2kp}{2k(p-3)-p}}(L^{\f{2lq}{2l(q-2)-q}}).$$
Hence, using the Aubin-Lions Lemma \ref{AL}, we can obtain
\begin{equation}\label{c15}
	\varrho^\gamma\in C([0,T];L^{\frac{lq}{l(q-3)+(\gamma-1)q}}(\mathbb{T}^d))  \,and\,\ \sqrt{\varrho }\in C([0,T];L^{\f{2lq}{2l(q-3)-q}}(\mathbb{T}^d)),\
\end{equation}
for $k\geq \frac{(\gamma-1)(d+q)p}{2q-d(p-3)}$, $p>3$ and $q>\max\{3, \frac{d(p-3)}{2}\}$.

Meanwhile, using the natural energy \eqref{energyineq}, \eqref{c14} and \eqref{c15}, we have
\begin{equation}\label{c16}
	\begin{aligned}
		0&\leq \overline{\lim_{t\rightarrow 0}}\int |\sqrt{\varrho} v-\sqrt{\varrho_0}v_0|^2 dx\\
		&=2\overline{\lim_{t\rightarrow 0}}\left(\int \left(\f{1}{2}\varrho |v|^2 +\f{1}{\gamma -1}\varrho ^\gamma \right)dx-\int\left(\f{1}{2}\varrho_0 |v_0|^2+\f{1}{\gamma -1}\varrho_0 ^\gamma \right)dx\right)\\
		&\ \ \ +2\overline{\lim_{t\rightarrow 0}}\left(\int\sqrt{\varrho_0}v_0\left(\sqrt{\varrho_0}v_0-\sqrt{\varrho} v\right)dx+\f{1}{\gamma -1}\int \left(\varrho_0^\gamma -\varrho^\gamma\right)dx\right)\\
		&\leq 2\overline{\lim_{t\rightarrow 0}}\int \sqrt{\varrho_0}v_0\left(\sqrt{\varrho_0}v_0-\sqrt{\varrho}v\right)dx\\
		&=2\overline{\lim_{t\rightarrow 0}}\int v_0 \left(\varrho_0 v_0 -\varrho v\right)dx+2\overline{\lim_{t\rightarrow 0}}\int v_0 \sqrt{\varrho }v\left(\sqrt{\varrho }-\sqrt{\varrho_0}\right)dx=0,
	\end{aligned}
\end{equation}
from
which it follows
\begin{equation}\label{c17}
	\sqrt{\varrho} v(t)\rightarrow \sqrt{\varrho }v(0)\ \ strongly\ in\ L^2(\mathbb{T}^d)\ as\ t\rightarrow 0^+.
\end{equation}
Similarly, one has the right temporal continuity  of $\sqrt{\varrho}v$ in $L^2(\mathbb{T}^d)$, hence, for any $t_0\geq 0$, we infer that
\begin{equation}\label{c18}
	\sqrt{\varrho} v(t)\rightarrow \sqrt{\varrho }v(t_0)\ \ strongly\ in\ L^2(\mathbb{T}^d)\ as\ t\rightarrow t_0^+.
\end{equation}
Before we go any further, it should be noted that \eqref{c11} remains valid for function $\phi(t)$ belonging to $W^{1,\infty}$ rather than $C^1$, then for any $t_0>0$, we redefine the test function $\phi(t)$ as $\phi_\tau$ for some positive $\tau$ and $\alpha $ such that $\tau +\alpha <t_0$, that is
\begin{equation}
	\phi_\tau(t)=\left\{\begin{array}{lll}
		0, & 0\leq t\leq \tau,\\
		\f{t-\tau}{\alpha}, & \tau\leq t\leq \tau+\alpha,\\
		1, &\tau+\alpha \leq t\leq t_0,\\
		\f{t_0-t}{\alpha }, & t_0\leq t\leq t_0 +\alpha ,\\
		0, & t_0+\alpha \leq t.
	\end{array}\right.
\end{equation}
Then substituting this test function into \eqref{c11}, we arrive at
\begin{equation}
	\begin{aligned}
		-\int_\tau^{\tau+\alpha}\int& \f{1}{\alpha}\left(\f{1}{2}\varrho v^2+\kappa\f{1}{\gamma-1}\varrho^\gamma \right)+\f{1}{\alpha}\int_{t_0}^{t_0+\alpha}\int 	\left(\f{1}{2}\varrho v^2+\kappa\f{1}{\gamma-1}\varrho^\gamma \right)=0.
	\end{aligned}	
\end{equation}
Taking $\alpha\rightarrow 0$  and the  Lebesgue point Theorem, we deduce that
\begin{equation}
	\begin{aligned}
		-\int&\left(\f{1}{2}\varrho v^2+\kappa\f{1}{\gamma-1}\varrho^\gamma \right)(\tau)dx+\int\left(\f{1}{2}\varrho v^2+\kappa\f{1}{\gamma-1}\varrho^\gamma \right)(t_0)dx=0.
	\end{aligned}
\end{equation}
Finally, letting $\tau\rightarrow 0$, using \eqref{c14} and \eqref{c17}, we can obtain
\begin{equation}\ba
	\int\left(\f{1}{2}\varrho v^2+\kappa\f{1}{\gamma-1}\varrho^\gamma \right)(t_0)dx=&\int\left(\f{1}{2}\varrho_0 v_0^2+\kappa\f{1}{\gamma-1}\varrho_0^\gamma \right)dx.
	\ea\end{equation}
Then we complete the proof of Theorem \ref{the1.3}.
 \end{proof}

\subsection{Vacuum case}
 \begin{proof}[Proof of Theorem \ref{the1.5}]
  For the  vacuum case, we need to mollify $v$ in  both the time and space directions.  With the proof of
  of Theorem \ref{the1.3} in hand, we just need to replace \eqref{ce3.13}
  by the following estimate
\begin{equation}\begin{aligned}
 &\int_0^T\int\phi(t)  \nabla v^{\varepsilon}\varrho[ (v\otimes v)^{\varepsilon} -    v^{\varepsilon}\otimes v^{\varepsilon}] \\
 \leq& C\|\varrho\|_{L^{ \f{p}{p-3}}(L^{ \f{q}{q-3}})}\| (v\otimes v)^{\varepsilon} -    v^{\varepsilon}\otimes v^{\varepsilon}\|_{L^{p/2}(L^{q/2})}\|\nabla v^{\varepsilon}\|_{L^{p}(L^{q})}\\
 \leq& C\varepsilon^{3\alpha-1}\|\varrho\|_{L^{ \f{p}{p-3}}(L^{ \f{q}{q-3}})}\|  v^{\varepsilon}\|^{3}_{ B^{\beta}_{p,\infty}(B^{\alpha}_{q,\infty} )}\\
 \leq& C\varepsilon^{3\alpha-1}\|\varrho\|_{L^{ l}(L^{ k})}\|  v^{\varepsilon}\|^{3}_{ B^{\beta}_{p,\infty}(B^{\alpha}_{q,\infty} )}.
 \end{aligned}\end{equation}
where   the   Constantin-E-Titi type commutators   \eqref{gcet} in Lemma  \ref{lem2.3c} and  the H\"older's inequality are used.
The rest proof is  the same as the one in the last theorem.
  \end{proof}

\section*{Acknowledgement}

 Wang was partially supported by  the National Natural
 Science Foundation of China under grant (No. 11971446, No. 12071113   and  No.  11601492).Ye was partially supported by the National Natural Science Foundation of China  under grant (No.11701145) and China Postdoctoral Science Foundation (No. 2020M672196).
  Yu was partially supported by the
National Natural Science Foundation of China (NNSFC) (No. 11901040), Beijing Natural
Science Foundation (BNSF) (No. 1204030) and Beijing Municipal Education Commission
(KM202011232020).

\end{document}